\numberwithin{equation}{section}
\newtheorem{theorem}{Theorem}[section]
\newtheorem{lemma}[theorem]{Lemma}
\newtheorem{proposition}[theorem]{Proposition}
\newcommand{\RR}{\mathbb{R}}
\newcommand{\cE}{\mathcal{E}}
\newcommand{\ZZ}{{\mathbb Z}}
\newcommand{\TT}{{\mathbb T}}
\newcommand{\e}{{\varepsilon}}
\def\beq{\begin{equation}}
\def\eeq{\end{equation}}
\def\bb1{{1\!\!1}}
\def\Ff{\widehat{f}}
\def\Fg{\widehat{g}}
\def\FcE{\widehat{\cE}}
\def\Frho{\widehat{\rho}}
\def\FE{\widehat{E}}
\def\FS{\widehat{S}}
\def\FG{\widehat{G}}
\begin{document}

\title{Remarks on Landau damping} 

\author{Toan T. Nguyen\footnotemark[1]
}

\maketitle

\footnotetext[1]{Penn State University, Department of Mathematics, State College, PA 16802. Email: nguyen@math.psu.edu. }

\vspace{-.3in}

\begin{center}
	\emph{In honor of Dang Duc Trong
		on the occasion of his sixtieth birthday 
%		\\a kind mentor to generations of Vietnamese mathematicians
		%\\as a token of gratitude and friendship 
	}
\end{center}
%\bigskip

\begin{abstract}
	
We provide few remarks on nonlinear Landau damping that concerns decay of the electric field in the classical Vlasov-Poisson system near spatially homogenous equilibria. In particular, this includes the analyticity framework, \`a la Grenier-Nguyen-Rodnianski, for non specialists, treating the analytic case studied by Mouhot-Villani, among other remarks for plasmas confined on a torus and in the whole space. Finally, we also establish the nonlinear Landau damping for a family of Vlasov-Riesz systems, which are new and surprisingly include the borderline Vlasov-Dirac-Benney system in the sharp analytic spaces.
 
\end{abstract}

%%%%%%%%%%%%%%%%%%%%%%

\section{Introduction}

%%%%%%%%%%%%%%%%%%%%%

Of great interest in plasma physics is to establish the large time behavior of solutions to the following classical Vlasov-Poisson system 
\begin{equation}
\label{VP1}
\partial_t f + v\cdot \nabla_x f + E \cdot \nabla_v f = 0
\end{equation} 
\begin{equation}\label{VP2}
E = -\nabla_x \phi, \qquad -\Delta_x \phi = \rho  
\end{equation}
modeling the dynamics of excited electrons confined on a torus $\TT_x^3\times \RR^3$ or in the whole space $\RR^3_x\times \RR^3_v$, in which $\rho(t,x) = \int_{\RR^3} f(t,x,v)\; dv-n_{\mathrm{ion}}$ denotes the charged density, and 
$ n_{\mathrm{ion}}$ is a non-negative constant representing the uniform ions background. The Cauchy problem is rather classical, going back to the works \cite{LionsPerthame,Pfaffelmoser,Schaeffer}, which assert that smooth initial data $f(0,x,v)$ with finite moments give rise to global-in-time smooth solutions. However, their large time behavior is largely open due to the presence of plasma echoes and rich underlying physics, which we shall now discuss.

In this paper, we shall develop an analyticity framework to establish Landau damping near Penrose stable equilibria. As an application, the results apply to a family of Vlasov systems with Riesz-type interaction potentials, namely the classical Vlasov equation \eqref{VP1} coupled with 
\begin{equation}\label{VP2Riesz}
E = -\nabla_x (-\Delta_x)^{-\alpha/2} \rho  
\end{equation}
 for all $\alpha \in [0,2]$. The classical Vlasov-Poisson system \eqref{VP1}-\eqref{VP2} corresponds to the case when $\alpha=2$, while the Vlasov-Dirac-Benney system is the case when $\alpha=0$. The case when $\alpha=0$ appears to be the borderline case for analytic regularity, as it is ill-posed in any Gevrey-$\frac{1}{\gamma}$ classes with $\gamma>1$, see \cite{BardosBesse, DanielToan, HanKwanR}. 

\subsection{Phase mixing}\label{secPM}

Phase mixing is a damping mechanism due to shearing in the phase space $\TT^3_x\times\RR^3_v$ by the free-transport dynamics (see Figure \ref{fig-PM}), 
\begin{equation}\label{free}
\partial_t f^0 + v \cdot \nabla_x f^0 = 0
\end{equation}
leading to decay of macroscopic quantities such as the charged density 
$\rho^0(t,x) = \int_{\RR^3} f^0(t,x,v) \; dv.
$ Indeed, the transport dynamics or shearing of each elementary mode $e^{ik\cdot x}$, for $k\in \ZZ^3\setminus\{0\}$, creates fast oscillation $e^{-ikt\cdot v}$ in $v$, which leads to rapid decay of macroscopic quantities, thanks to velocity averaging. The decay costs derivatives, and it is exponentially fast  in $|kt|$ if data are analytic and polynomially fast if data only have finite regularity. Note that the zero mode $k=0$ does not decay, but is conserved in time. For sake of simplicity, we always assume that the total charged density $\int \rho(t,x)\; dx =0$ (i.e. plasmas are globally neutralized).  

The phase mixing remains valid in the whole space $\RR^3_x\times \RR^3_v$ for each elementary wave packet of spatial frequency $k\in \RR^3\setminus\{0\}$. In addition, as the particles are unconfined and transported along particle trajectories $(x,v)\mapsto (x+vt,v)$ in the whole space, the contracting in volume $dv = t^{-3} dx_t$ yields an additional decay of macroscopic quantities at order of $t^{-3}$, often referred to as dispersion by the transport dynamics. The latter transport dispersion requires initial data to be sufficiently localized in space.

Consequently, the free electric field (i.e. those that are generated by the free transport dynamics) decays, through the Poisson equation $E^0 = \nabla_x \Delta_x^{-1}\rho^0$, as in \eqref{VP2}, exponentially fast for analytic data and polynomial fast for Sobolev data. In the whole space, the electric field decays at rate of order $t^{-2}$, which can be seen via the standard elliptic estimate $\| E^0\|_{L^\infty}\lesssim \|\rho^0\|^{1/3}_{L^1}\|\rho^0\|_{L^\infty}^{2/3}$, upon using the transport dispersion and mass conservation for $\rho^0(t,x)$. 

\begin{figure}\label{fig-PM}
\centering	
\subfigure{
		\includegraphics[scale=.17]{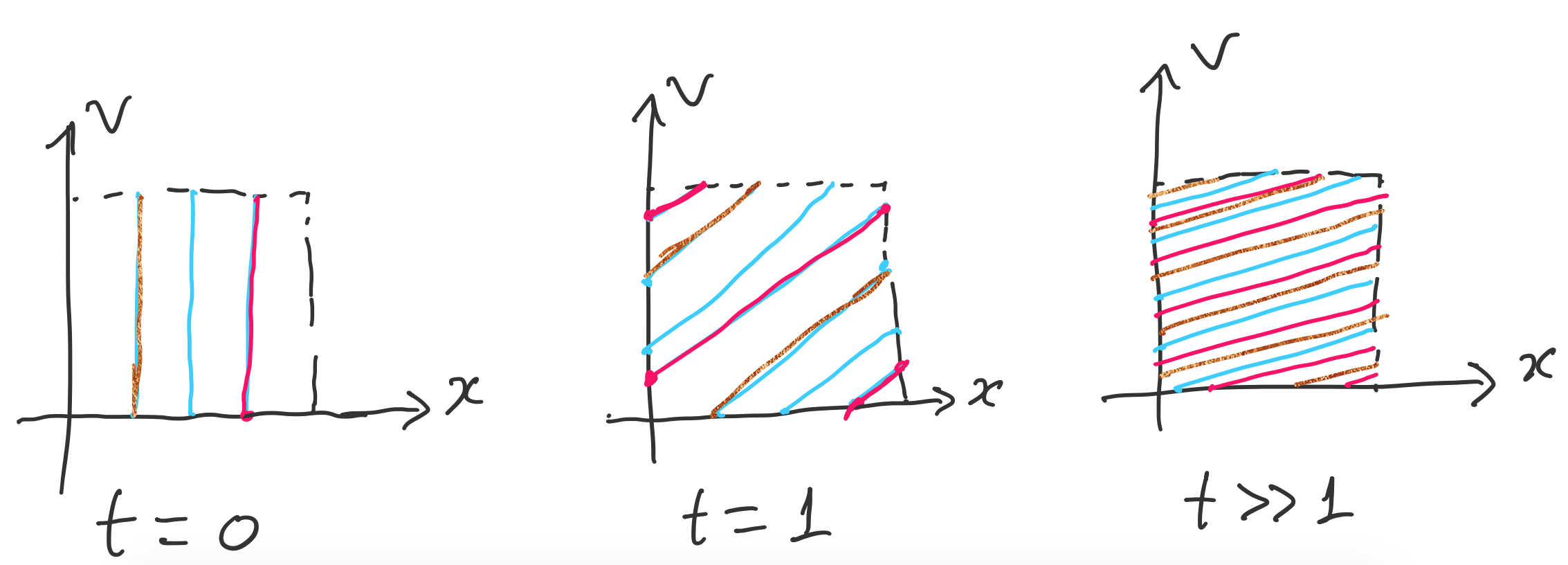}}
	%\caption{$\partial_t f + v \cdot \nabla_x f =0$}
	%\quad %$\Longrightarrow$ %
	\hspace{.4in}
	%\quad
	\subfigure{
		\includegraphics[scale=.24]{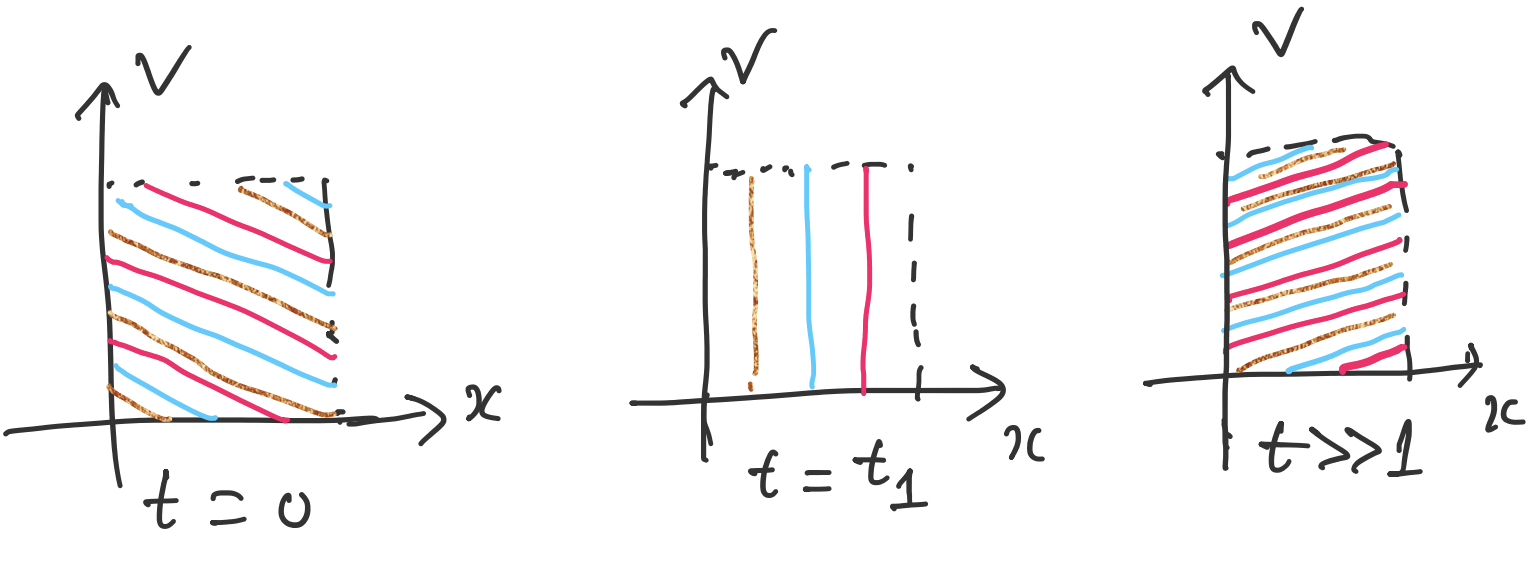}}
	\caption{\em  Depicted are particle trajectories of free gas in the phase space. As time $t$ increases, particles that may be unmixed at $t=0$ (left) or at $t=t_1$ (right) will be mixed in the large time. Averaging in $v$ then yields rapid decay due to oscillation.}
\end{figure}

\subsection{Plasma echoes}\label{sec-pecho}

Returning to the nonlinear Vlasov-Poisson system \eqref{VP1}-\eqref{VP2}, we may expect that the phase mixing mechanism remains dominant and the electric field $E$ is damped by phase mixing as described above. However, the electric field may become much larger due to plasma echoes. Indeed, inductively, the quadratic nonlinear term $-E\cdot \nabla_v f$ generates an iterative density 
\begin{equation}\label{denR}
\rho^R = -\int_0^t \int_{\RR^3} [E\cdot \nabla_v f](s,x-v(t-s),v)\; dvds 
\end{equation}
which is obtained by viewing the Vlasov equation \eqref{VP1} as the free transport with forcing $-E\cdot \nabla_v f$. Here and in what follows, $[Ef](t,x,v) = E(t,x)f(t,x,v)$. Therefore, using the Poisson equation \eqref{VP2} and noting $\nabla_v - t\nabla_x$ is invariant under \eqref{free}, the nonlinear electric field of \eqref{VP1}-\eqref{VP2} is of the form
\begin{equation}
\label{Eform}
E = E^0(t,x) + E^R(t,x)
\end{equation}  
where $E^0(t,x)$ is the free electric field generated by \eqref{free}, and $E^R=  -\nabla_x\Delta_x^{-1}\nabla_x \cdot \cE^R$, having set  
\begin{equation}
\label{ER}
\cE^R := \int_0^t \int_{\RR^3} (t-s)[Ef](s,x-v(t-s),v)\; dvds.
\end{equation}
Effectively, up to a Fourier multiplier of order zero $\nabla_x\Delta_x^{-1}\nabla_x$, the electric field is inductively computed by the collective integral of $[Ef]$ along the free transport, which may be large due to resonant interaction between different energy levels (i.e. different modes) that may cancel out oscillation in $v$, leading to plasma echoes. Indeed, for $k\in \ZZ^3, \eta \in \RR^3$, letting 
$\FE_k(t)$ and $\Ff_{k,\eta}$ denote the Fourier transform of $E(t,x)$ in $x$ and of $f(t,x,v)$ in $(x,v)$, respectively, we obtain 
\begin{equation}
\label{ER-Fr}
\FcE^R_k(t) = \sum_{k_1+k_2 = k}\int_0^t (t-s) \FE_{k_1}(s) \Ff_{k_2, k(t-s)}(s)ds,
\end{equation}
in which $k_1,k_2\in \ZZ^3$. 
To leading order, we may expect that $\Ff_{k,\eta}(t)$ remains sufficiently localized near frequency $(k,\eta + kt)$ dictated by \eqref{free}, and therefore $\Ff_{k_2, k(t-s)}(s)$ is localized near $(k_2,kt - k_1s)$, recalling $k_1+k_2=k$. In particular, locally near $kt = k_1 s$, we may have an amplification from an electric pulse at $(s,k_1)$ to that at $(t,k)$ by a factor of order 
$(t-s)\approx \frac{(k_1-k)\cdot k_1}{|k_1|^2} t$, which could be arbitrarily large in the large time. Note that since $s<t$, the energy transfer between electric pulses is from high to low (or more precisely, from $(s,k_1)$ to $(t,k)$ with $|k_1|^2>k\cdot k_1$). 

The physics of plasma echoes goes as follows. Start with an elementary wave $e^{ik_1\cdot x} e^{i\eta_1\cdot v} g_1(v)$ for some analytic function $g_1(v)$ (i.e. exponentially localized in frequency). This leads to an electric pulse $E_1(t)$ which is exponentially localized at the critical time $t_1 = \eta_1/k_1$ (by phase mixing, noting oscillation in $v$ vanishes at $\eta_1 = k_1 t$). Inject another elementary wave $e^{ik_2\cdot x} e^{i\eta_2\cdot v} g_2(v)$ whose electric field $E_2(t)$ is exponentially localized at a different critical time $t_2 = \eta_2/k_2$. The quadratic interaction between the two modes gives rise to a third wave, computed by \eqref{ER-Fr}, whose is again exponentially localized at a new critical time $t_3 =(\eta_1+\eta_2)/(k_1+k_2)$, which could be arbitrarily long after the first two waves have damped away (e.g., taking $k_2=-k_1+1, \eta_1 = k_1$, and $\eta_2 = 2k_1$ with $k_1\gg1$). This phenomenon is often referred to as a plasma echo, and the process leads to an infinite cascade of plasma echoes. See \cite{Gould} for more discussion on the physics, and also \cite{Bed2, GNR2} for some mathematical justification of plasma echoes. 

\subsection{Suppression of echoes}\label{sec-echo}

In view of the electric field formulation \eqref{Eform}-\eqref{ER}, there are no loss of derivatives, but apparent lack of decay due to the presence of $(t-s)$ in the time integration \eqref{ER}. However, in the case of plasmas on a torus, decay costs derivatives, and a convenient way to deal with the linear growth in time is to work with analytic functions which are exponentially localized in time due to phase mixing mechanism, see Section \ref{secPM}. This was first established in the celebrated work by Mouhot-Villani \cite{MV} for analytic data near spatially homogenous equilibria, and subsequently extended to include Gevrey data \cite{BMM-apde,GNR1,IPWW3}, namely those that are exponentially localized in $\langle k, \eta\rangle^\gamma$, for $\gamma\in [1/3,1]$ (noting the analytic case corresponds to $\gamma=1$). The fact that the Gevrey-$\frac13$ regularity was needed may be seen formally from \eqref{ER-Fr}, which leads to an amplification of order $(t-s)/|k|\approx t/|k|^2 \approx |\eta|/|k|^3$ after each iteration from $k+1$ to $k$ (noting the extra factor of $1/|k|$ is due to the integration in $s$ locally near resonant times $kt \sim k_1 s$, with $k_1=k+1$). This amplification accumulates and leads to a growth in Gevrey-$\frac13$ norms as observed in \cite{Bed2}. 

In the whole space, the transport dispersion yields extra decay without costing derivatives (but spatial localization), and one may hope that plasma echoes could be suppressed. Indeed, one may attempt to bound the collective integral \eqref{ER} by 
\begin{equation}
\label{ER-est}
\begin{aligned}
\cE^R(t,x) \lesssim \int_0^t (t-s)^{-2}\int_{\RR^3} |E(s,y)f(s,y,\frac{x-y}{t-s})|\; dyds,
\end{aligned}
\end{equation}
upon introducing $y = x - v(t-s)$. This leads to a dispersive decay of order $t^{-2}$ for the electric field, provided that $f(t,x,v)$ remains sufficiently localized in $x$. This was indeed established in the classical work by Bardos-Degond \cite{BD} for the case near vacuum (e.g., $f$ is sufficiently localized and small in $C^1$ norms). In the case near spatially homogenous equilibria, whether plasma echoes could be suppressed remains an open problem, since $f$ is no longer localized in space due to spatial homogenity of background equilibria. In fact, the non-localization or long-range interaction leads to plasma oscillations, a new regime in which the electric field oscillates and disperses at a much slower rate of order $t^{-3/2}$. Phase mixing is no longer available in this regime, which causes several new issues; see Section \ref{sec-linLD} for more discussion.

Finally, we mention that plasma echoes can also be suppressed by screening out the long-range interaction in the whole space (e.g., replacing Coulomb's by Yukawa-type potential). This excludes the mentioned plasma oscillations, and Landau damping near spatially homogenous equilibria can be established for data with finite regularity \cite{BMM-cpam, HKNR2}. The suppression of echoes can also be done via enhanced dissipation \cite{Bed1,CLN}.

\subsection{Linear Landau damping}\label{sec-linLD}

Landau damping concerns decay of the electric field in a non-trivial non-equilibrium state \cite{Landau}. Namely, consider a spatially homogenous equilibrium $\mu(v)$ at neutrality $\int \mu(v)\; dv = n_{\mathrm{ions}}$ with vanishing electric field. Returning to the electric field formulation \eqref{Eform}-\eqref{ER}, with $f$ replaced by $\mu + f$, we note that the collective electric field $\cE^R$ now contributes a linear term 
\begin{equation}
\label{ER-lin}
\cE[E] = \int_0^t \int_{\RR^3} (t-s)[E\mu](s,x-v(t-s),v)\; dvds.
\end{equation}
This linear term may be put into the left of \eqref{Eform}, leading to 
\begin{equation}
\label{Eform1}
E + \nabla_x\Delta_x^{-1}\nabla_x \cdot \cE[E]  = E^0(t,x) + E^R(t,x)
\end{equation}  
where $E^0, E^R$ are defined as in \eqref{Eform}-\eqref{ER}, which corresponds to the free electric field and nonlinear integral term \eqref{ER}. We stress that this is a formulation of the electric field for the nonlinear system \eqref{VP1}-\eqref{VP2} near equilibria $\mu(v)$. Let us first recall the following result concerning the linearized operator, precisely $1 + \nabla_x\Delta_x^{-1}\nabla_x \cdot \cE[\cdot]$, appearing on the left hand side of \eqref{Eform1}.

\begin{theorem}[Linear Landau damping]\label{theo-linLD} Let $\mu(v)$ be any radial, sufficiently smooth, and rapidly decaying equilibrium, and let $G(t,x)$ be the spacetime Green function of the linearized operator in \eqref{Eform1}.
Then, for each $k\in \RR^3$, the Fourier transform of $G(t,x)$ satisfies 

\begin{equation}\label{decomp-FG} 
\FG_k(t) = \delta(t) + \sum_\pm \FG^{osc}_{k,\pm}(t)  +   \FG^{r}_k(t) 
\end{equation}
where $
\FG^{osc}_{k,\pm}(t) = e^{\lambda_\pm(k)t} a_\pm(k)
$
for some sufficiently smooth Fourier multipliers $\lambda_\pm(k), a_\pm(k)$ that are compactly supported near the origin, and of Klein-Gordon's type $\Im \lambda_\pm(k) \approx \pm \sqrt{1+|k|^2}$, with $\Re \lambda_\pm(k)\le 0$. In addition, there holds   
\begin{equation}\label{bd-Gr}
|\FG^{r}_k(t)| \lesssim |k|^3 \langle k\rangle^{-4} \left\{\begin{aligned}
\langle kt\rangle^{-N_0} , &\qquad \mbox{if $\mu(v)$ has limited regularity of order $N_0$}
\\e^{-\theta_0\langle kt\rangle}, &\qquad \mbox{if $\mu(v)$ is analytic with radius of order $\theta_0$} 
\end{aligned}\right.
\end{equation}
uniformly in $k\in \RR^3$. 
\end{theorem}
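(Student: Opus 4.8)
The plan is to pass to the Fourier variable in $x$, reduce \eqref{Eform1} to a scalar convolution Volterra equation mode by mode, and then read off the structure \eqref{decomp-FG} of $\FG_k$ from the analytic properties of the associated plasma dispersion function via the Laplace transform. Since $\mu$ is radial, $\hat\mu(k\tau)$ is a scalar depending only on $|k|\tau$, while the order-zero multiplier $\nabla_x\Delta_x^{-1}\nabla_x\cdot$ is, in Fourier, the orthogonal projection $kk^T/|k|^2$ onto the $k$-direction. Hence the component of the field transverse to $k$ solves a trivial equation contributing only $\delta(t)$, while the longitudinal Green function obeys
\[
\FG_k(t) + \int_0^t K_k(t-s)\FG_k(s)\,ds = \delta(t), \qquad K_k(\tau) = \tau\,\hat\mu(k\tau).
\]

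Applying the Laplace transform in $t$ turns the convolution into a product and yields $\widetilde{G}_k(\lambda)=1/D_k(\lambda)$, where the dispersion (dielectric) function is
\[
D_k(\lambda) = 1 + \int_0^\infty e^{-\lambda t}\,t\,\hat\mu(kt)\,dt = 1 + \int_{\RR^3}\frac{\mu(v)}{(\lambda+ik\cdot v)^2}\,dv,
\]
and note $\widetilde{G}_k(\lambda)\to1$ as $|\lambda|\to\infty$. The Penrose stability assumption gives $D_k(\lambda)\neq 0$ for $\Re\lambda\geq0$, so the plan is to locate the zeros (Landau poles) strictly in $\Re\lambda<0$. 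As $k\to0$ one has $D_k(\lambda)\to 1+n/\lambda^2$ with $n=\int\mu$, producing a conjugate pair $\lambda_\pm(k)$ near $\pm i$ that depends smoothly on $k$ with $\Re\lambda_\pm\leq0$ and $\Im\lambda_\pm\approx\pm\sqrt{1+|k|^2}$ (the Bohm--Gross plasma-oscillation branch); for $|k|$ large these poles are absorbed into the continuous spectrum, which is precisely why $\lambda_\pm,a_\pm$ are smooth multipliers compactly supported near the origin.

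I would then invert the Laplace transform, $\FG_k(t)=\frac{1}{2\pi i}\int_{\Re\lambda=\gamma}e^{\lambda t}\widetilde{G}_k(\lambda)\,d\lambda$, and split $\widetilde{G}_k = 1 + (\widetilde{G}_k-1)$. The constant $1$ returns the $\delta(t)$ in \eqref{decomp-FG}, while $\widetilde{G}_k - 1 = -\widetilde{K}_k/D_k$. Deforming the contour leftward to $\Re\lambda=-\theta$ and crossing the simple poles $\lambda_\pm(k)$ produces the residue terms $\FG^{osc}_{k,\pm}(t)=a_\pm(k)e^{\lambda_\pm(k)t}$ with $a_\pm(k)=1/D_k'(\lambda_\pm(k))$, and the leftover contour integral is the remainder $\FG^{r}_k(t)$. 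To obtain \eqref{bd-Gr} I would estimate this residual integral on the shifted contour using the analytic continuation of $D_k$: if $\mu$ is analytic of radius $\theta_0$, then $\hat\mu(\xi)\lesssim e^{-\theta_0|\xi|}$, so $\widetilde{K}_k$ (hence $D_k$) continues analytically to $\Re\lambda>-\theta_0|k|$, and pushing the contour to $\Re\lambda=-\theta_0|k|+0^+$ yields the exponential bound $e^{-\theta_0\langle kt\rangle}$; if $\mu$ has only $N_0$ derivatives, then $\hat\mu(\xi)\lesssim\langle\xi\rangle^{-N_0}$ and integration by parts (or stationary phase) in the inversion integral gives the algebraic rate $\langle kt\rangle^{-N_0}$. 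The prefactor $|k|^3\langle k\rangle^{-4}$ is then a matter of symbol bookkeeping: the $\langle k\rangle^{-4}$ decay reflects large-$|k|$ integrability of the kernel $K_k$, while the $|k|^3$ vanishing at the origin records that, once the plasma-oscillation poles are extracted, the remaining symbol $-\widetilde{K}_k/D_k$ is higher order in $k$ near $k=0$.

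The hard part will be the uniform-in-$k$ analysis of $D_k(\lambda)$ underlying Steps two through four. Concretely, I must show that $1/D_k$ stays bounded — that is, $D_k$ is bounded away from zero — along the deformed contour uniformly in $k$, equivalently that no zeros other than the extracted $\lambda_\pm(k)$ enter the relevant strip, and I must track $\lambda_\pm(k)$ and $a_\pm(k)$ as genuinely smooth, compactly supported multipliers (via the implicit function theorem applied to $D_k(\lambda)=0$ near the neutral branch) while verifying $\Re\lambda_\pm\le 0$ from the Penrose/stability structure. Carrying the sharp $|k|$-weights through both the residue computation and the contour estimate, so that the single bound \eqref{bd-Gr} holds simultaneously for small and large $k$ and in both the analytic and finite-regularity cases, is where the real technical work lies.
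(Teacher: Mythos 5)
The paper itself contains no proof of Theorem \ref{theo-linLD}: it is quoted from the literature, with the remark that it ``has been proven, up to some variants, by several authors'' \cite{MV,GNR1,HKNR2,HKNR3,HKNR4,BMM-lin,IPWW2,Toan}. So your proposal must be measured against those references, and your route --- Fourier reduction to the scalar Volterra equation $\FG_k + K_k\star\FG_k=\delta$ with $K_k(\tau)=\tau\widehat\mu(k\tau)$, Laplace transform $\widetilde G_k=1/D_k$, Penrose stability to exclude zeros in $\Re\lambda\ge 0$, extraction of the Langmuir pair $\lambda_\pm(k)$ near $\pm i$ by residues (your formula $a_\pm(k)=1/D_k'(\lambda_\pm(k))$ is correct, since $\widetilde K_k(\lambda_\pm)=-1$), and deformation to $\Re\lambda=-\theta_0|k|$ for the remainder --- is precisely the approach of \cite{HKNR3,BMM-lin,Toan}. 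For analytic $\mu$ the outline is sound, and you correctly identify where the real work lies: the uniform-in-$k$ zero-free strip away from $\lambda_\pm(k)$, the implicit-function-theorem tracking of the roots, the gluing at the cutoff radius where the oscillatory part is absorbed into $\FG^r$, and the bookkeeping that produces the prefactor $|k|^3\langle k\rangle^{-4}$.

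The genuine gap is the finite-regularity branch of \eqref{bd-Gr}. Your mechanism for both defining $\lambda_\pm(k)$ and estimating $\FG^r_k$ is meromorphic continuation of $D_k$ past the imaginary axis followed by contour crossing; but when $\mu$ has only $N_0$ derivatives this continuation does not exist at all: for $\Re\lambda<0$ the integral $\int_0^\infty e^{-\lambda t}\,t\,\widehat\mu(kt)\,dt$ diverges, since $|\widehat\mu(\xi)|\lesssim\langle\xi\rangle^{-N_0}$ provides no exponential decay. There is then no contour to deform and no poles to cross, so the sentence ``integration by parts (or stationary phase) in the inversion integral gives the algebraic rate'' is not a proof sketch but a restatement of the claim. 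The correct finite-regularity argument works on the boundary $\Re\lambda=0^+$, where $1/D_k(i\omega+0)$ is \emph{not} uniformly bounded in $k$: it develops sharp peaks at $\omega\approx\pm\sqrt{1+|k|^2}$ whose height, of order $|k|^2/|\mu_1'(1/|k|)|$ in terms of the marginal $\mu_1$ of $\mu$, diverges as $k\to0$ --- this is exactly the plasma-oscillation obstruction. One must extract the oscillatory component by subtracting explicit pole-type profiles from $1/D_k(i\omega+0)$ near these near-resonant frequencies, with $\lambda_\pm(k)$ defined from boundary values (possibly with $\Re\lambda_\pm=0$, which is why the theorem only asserts $\Re\lambda_\pm\le 0$), and only then integrate by parts $N_0$ times in $\omega$ in what remains, using bounds on $\partial_\omega^j D_k(i\omega+0)$. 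Relatedly, your statement that for large $|k|$ the poles are ``absorbed into the continuous spectrum'' is physics shorthand for a fact that itself requires proof in the analytic case, namely a zero-free strip $\{-\theta_0|k|<\Re\lambda\le 0\}$ for $|k|\ge\kappa_0$; as written, your argument establishes the theorem only for analytic $\mu$, and even there modulo the uniform analysis you defer.
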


Let us comment on the theorem, which has been proven, up to some variants, by several authors, e.g., \cite{MV, GNR1,HKNR2,HKNR3, HKNR4, BMM-lin,IPWW2,Toan}. The same results are also obtained in low dimensions under the so-called Penrose stability condition to exclude possible unstable modes (which are absent in dimensions three and higher for radial equilibria, \cite{MV}). We note that the Green function $G(t,x)$ is constructed in such a way that the nonlinear electric field can now be solved, iteratively from \eqref{Eform1}, yielding   
\begin{equation}
\label{nonE}
E = G\star_{t,x}E^0(t,x) + G\star_{t,x}E^R(t,x)
\end{equation}  	
in which we extend by zero the functions defined in negative $t$ (cf. \eqref{Eform}). 
The boundedness of $G$ as an operator from $L^2_{t,x}$  to $L^2_{t,x}$ was proven in \cite{MV}, while the pointwise formulation in term of the Green function was first established in \cite{GNR1} for the torus case and \cite{HKNR2} for the whole space case, which appears convenient in nonlinear analyses. The fact that the Green function consists of a Klein-Gordon's oscillatory component is obtained independently in \cite{HKNR3} for general equilibria and in \cite{BMM-lin} for Gaussians. In fact, there is a non-trivial survival threshold of wavenumbers $\kappa_0$, below which the oscillatory behavior persists for compactly supported or relativistic equilibria \cite{Toan,HKNR4}. Above the threshold, the oscillatory component is damped, i.e., $\Re \lambda_\pm(k)<0$, in a manner that is very sensitive to the vanishing or decaying rate of equilibria at their maximal velocity: the faster it vanishes, the weaker this damping rate is. For radial and positive equilibria\footnote{this formula is stated in dimension three. In dimension one, it reads $\Re \lambda_\pm(k) \approx |k|^{-2}\partial_v \mu(\frac{1}{|k|})$, which is often used to interpret the physical meaning of Landau damping versus Landau growth, depending on more or less particles in front of the wave. Note that $\partial_v\mu(v)$ is evaluated at $v = \frac{1}{|k|}$, namely at the resonant interaction between particles and waves.}, $\Re \lambda_\pm(k) \approx -|k|^{-3}\mu(\frac{1}{|k|})$ for $|k|\ll1$, which is often referred to as the original Landau damping, as documented in the physical literature \cite{Landau, Trivelpiece}. For general radial equilibria, a similar damping rate can also be derived, see \cite{Toan}. We note that while the oscillatory behavior was absent in the case of torus for positive equilibria \cite{MV}, they may be found for compactly supported or relativistic equilibria on a sufficiently large torus. Consequently, we note that linear Landau damping or decay of the linearized electric field $E = G\star_{t,x}E^0(t,x)$ now follows straightforwardly from that of the Green function and phase mixing estimates on $E^0$, see, e.g., \cite{HKNR3,Toan}.   

\subsection{Nonlinear Landau damping}

To establish nonlinear Landau damping, one must analyze plasma echoes as appeared in the formulation \eqref{nonE}, see also Section \ref{sec-echo}. Mouhot-Villani \cite{MV} and several subsequent works  \cite{BMM-apde, GNR1,GNR2,GIa, IPWW3} were able to suppress echoes by analytic and Gevrey regularity in the case of a torus, with which phase mixing remains dominant, leading to exponential decay of the electric field. 

In the case of the whole space, plasma echoes can again be suppressed via the extra decay from transport dispersion and nonlinear Landau damping holds for data with finite regularity \cite{BMM-cpam, HKNR2,TrinhLD} for screened Vlasov-Poisson systems. The screening turns out to be fundamental, which excludes plasma oscillations, namely the absence of oscillatory component $\FG^{osc}_{k,\pm}(t)$ in linear Landau damping \eqref{decomp-FG}. For otherwise, plasma oscillations are always present for all rapidly decaying equilibria in the whole space, recalling the real part of the dispersion relation $\Re \lambda_\pm(k) \approx - |k|^{-3}\mu(\frac{1}{|k|})$ is vanishing as $|k|\to 0$ (in fact, identically zero for all $k$ below the survival threshold, \cite{Toan}). Let us also mention a recent work \cite{IPWW} that treats a special equilibrium $\mu(v) = \langle v\rangle^{-4}$, for which Landau damping incidentally coincides phase mixing with exponential damping for each Fourier mode, which again excludes plasma oscillations. For general equilibria in the whole space, phase mixing is no longer available in the low frequency regime, and the Klein-Gordon's dispersion of plasma oscillations must be exploited to control plasma echoes. This was notably open even for the case of plasmas near vacuum, exactly due to lack of phase mixing, which is recently resolved in \cite{ToanVKG1}. The Landau damping for the Vlasov-Klein-Gordon systems near spatially homogenous equilbria is also established in \cite{ToanVKG2}.

%%%%%%%%%%%%%%%%%%%%

\section{Analyticity framework}\label{sec-analytic}

%%%%%%%%%%%%%%%%%%%%

In this section, we present the analyticity framework, \`a la Grenier-Nguyen-Rodnianski \cite{GNR1}, to establish nonlinear Landau damping, focusing only the analytic case. The framework was developed to treat both analytic and Gevrey data in an elementary way. However, if one were to focus only the analytic case as was in Mouhot-Villani \cite{MV}, the framework is even more direct without exploiting much structure of the transport equation, a remark which we thought worth pointing it out, if not already apparent from \cite{GNR1}. In any case, the framework is self-contained, apart from the linear theory provided in the previous section, which may be of use in other contexts.  

Precisely, we consider the nonlinear Vlasov-Poisson system \eqref{VP1}-\eqref{VP2} near spatially homogenous, real analytic, and Penrose stable equilibria $\mu(v)$ on the torus $\TT_x^d \times \RR^d_x$, $d\ge 1$ (recalling that all radial and positive equilibria are Penrose stable in dimensions $d\ge 3$, see \cite{MV}), namely 
\begin{equation}
\label{VP-pert}
\partial_t f + v \cdot \nabla_x f  = - E \cdot \nabla_v \mu  - E \cdot\nabla_v f, \qquad \nabla_x \cdot E = \rho, 
\end{equation} 
with initial perturbations  $f_0(x,v)$ of zero mass. Let $\Ff_{0,k,\eta}$ be the Fourier transform of $f_0(x,v)$ in $x,v$, respectively. Then, we have the following theorem. 

\begin{theorem}[Nonlinear Landau damping]\label{theo-LD}
	Let $\lambda_0 > 0$, and $\mu(v)$ be any spatially homogenous, real analytic, Penrose stable equilibria. Then, for any initial data $f_0$ of the system \eqref{VP-pert} with  
	$$ 
\epsilon_0:=	\sum_{|\alpha|\le d}\sum_{k\in \ZZ^d}  \int_{\RR^d}  e^{2\lambda_0 \langle k,\eta\rangle} 
	|\partial^\alpha_\eta \widehat f_{0,k,\eta}| d \eta 
	$$
	being sufficiently small, 
	the nonlinear Landau damping holds: $\|E(t)\|_{L^\infty}\lesssim \epsilon_0 e^{-\lambda_0 t}$ for all $t\ge 0$, and $f(t,x+vt,v)$ converges to a limit $f_\infty(x,v)$ in the large time.
\end{theorem}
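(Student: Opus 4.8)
The plan is to solve the closed electric-field equation \eqref{nonE} by a bootstrap (equivalently a contraction) in an analytic-in-$x$, exponentially time-decaying norm, and to reconstruct $f$ from its free-transport profile. Introduce $g(t,x,v):=f(t,x+vt,v)$, so that $\widehat g_{k,\eta}(t)=\widehat f_{k,\eta-kt}(t)$, the transport is trivialized, and the density is the evaluation $\widehat\rho_k(t)=\widehat g_{k,kt}(t)$ at the sliding frequency $\eta=kt$; phase mixing is thereby encoded as decay of an analytic weight at $\eta=kt$. Let $\lambda(t)$ decrease smoothly from $\lambda(0)=2\lambda_0$ to $\lambda(\infty)=\lambda_0$, with finite total variation $\int_0^\infty|\dot\lambda(t)|\,dt=\lambda_0$, and set
\[
\|g(t)\|_{\lambda(t)}:=\sum_{|\alpha|\le d}\sum_{k\in\ZZ^d}\int_{\RR^d}e^{\lambda(t)\langle k,\eta\rangle}|\partial_\eta^\alpha\widehat g_{k,\eta}(t)|\,d\eta,\qquad
\|E\|_X:=\sum_{k\in\ZZ^d}\sup_{t\ge0}e^{\lambda(t)\langle k,kt\rangle}|\widehat E_k(t)|,
\]
the $d$ velocity-frequency derivatives serving, via $W^{d,1}(\RR^d)\hookrightarrow L^\infty$, to control $\widehat g_{k,\eta}$ pointwise in $\eta$ and so to license the evaluation at $\eta=kt$. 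On the torus $|k|\ge1$ for $k\ne0$, so $\langle k,kt\rangle\ge t$ and $\|E(t)\|_{L^\infty}\le\sum_k|\widehat E_k(t)|\le e^{-\lambda_0 t}\|E\|_X$: the claimed rate $\lambda_0$ is the terminal radius, and the decrease of $\lambda(t)$ (total budget $\lambda_0$) is what will pay for the loss of derivatives in the echoes.

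Three inputs drive the bootstrap. First, the free field: by phase mixing $\widehat E^0_k(t)$ inherits from $\widehat f_{0,k,kt}$ the bound $\lesssim\epsilon_0 e^{-2\lambda_0\langle k,kt\rangle}$, so $\|E^0\|_X\lesssim\epsilon_0$. Second, the Green operator: since $\mu$ is spatially homogeneous the linearized operator is a spatial Fourier multiplier, and $G\star_{t,x}$ acts diagonally in $k$ as the temporal convolution $(G\star_{t,x}h)_k(t)=\int_0^t\widehat G_k(t-\tau)\widehat h_k(\tau)\,d\tau$. By Theorem \ref{theo-linLD} the kernel is $\delta(t)+\sum_\pm\widehat G^{osc}_{k,\pm}(t)+\widehat G^r_k(t)$: the $\delta$ is the identity, the oscillatory parts (absent on the torus for positive Penrose-stable $\mu$, which is the regime of clean exponential damping) are bounded with $\Re\lambda_\pm\le0$, and $\widehat G^r_k$ is convolved against the exponentially decaying kernel \eqref{bd-Gr}. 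A Young-type estimate in $t$ then gives $\|G\star_{t,x}h\|_X\lesssim\|h\|_X$, provided the top radius $2\lambda_0$ stays below the Green rate $\theta_0$.

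The third and decisive input is the nonlinear term. Writing $E^R=-\nabla_x\Delta_x^{-1}\nabla_x\cdot\cE^R$ with the order-zero multiplier and $\cE^R$ as in \eqref{ER}, its Fourier form \eqref{ER-Fr} is bilinear in $(E,g)$ and carries the dangerous kernel $(t-s)$. In profile variables the factor $\widehat f_{k_2,k(t-s)}(s)$ equals $\widehat g_{k_2,\,kt-k_1s}(s)$ with $k_1+k_2=k$, and since $(k,kt)=(k_1,k_1s)+(k_2,kt-k_1s)$ the product of the three analytic weights produces the exponent
\[
\lambda(t)\langle k,kt\rangle-\lambda(s)\langle k_1,k_1s\rangle-\lambda(s)\langle k_2,\,kt-k_1s\rangle\ \le\ \big(\lambda(t)-\lambda(s)\big)\langle k,kt\rangle\ \le\ 0,
\]
by subadditivity of $\langle\cdot\rangle$ and $\lambda(t)\le\lambda(s)$. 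The residual negative factor $e^{(\lambda(t)-\lambda(s))\langle k,kt\rangle}$, with $\lambda(s)-\lambda(t)\gtrsim|\dot\lambda(t)|(t-s)$ and $\langle k,kt\rangle\gtrsim|k|t$, is the Cauchy--Kovalevskaya gain that dominates the algebraic $(t-s)$; what remains is a resonant integral over the window of width $\sim t/|k_1|^2$ about $kt=k_1s$, contributing $\lesssim|k|^{-1}$ per echo, with the cascade summing precisely because the total budget $\int|\dot\lambda|=\lambda_0$ is finite in the analytic (Gevrey-$1$) class. This yields the quadratic bound $\|E^R\|_X\lesssim\|E\|_X\,\sup_t\|g(t)\|_{\lambda(t)}$. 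I expect this bilinear echo estimate, absorbing the $(t-s)$ growth, to be the main obstacle; in Gevrey-$\frac1\gamma$ with $\gamma<\frac13$ the same bookkeeping diverges.

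To close, propagate $g$ by a standard a priori estimate from \eqref{VP-pert}: the forcing $-E(t,x+vt)\cdot(\nabla_v\mu+(\nabla_v-t\nabla_x)g)$ loses one derivative through the transport factor $(\nabla_v-t\nabla_x)g$, which is absorbed by the Cauchy--Kovalevskaya gain $-|\dot\lambda(t)|$ coming from differentiating the weight $e^{\lambda(t)\langle k,\eta\rangle}$, while the exponential decay of $E$ keeps the nonlinear contribution small; this gives $\sup_t\|g(t)\|_{\lambda(t)}\lesssim\epsilon_0+\|E\|_X\sup_t\|g(t)\|_{\lambda(t)}$, hence $\lesssim\epsilon_0$. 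Feeding the three inputs into \eqref{nonE} yields $\|E\|_X\lesssim\|E^0\|_X+\|E^R\|_X\lesssim C\epsilon_0+C\epsilon_0\|E\|_X$, so for $\epsilon_0$ small the bootstrap closes with $\|E\|_X\lesssim\epsilon_0$ and $\|E(t)\|_{L^\infty}\lesssim\epsilon_0 e^{-\lambda_0 t}$. Finally, with $E$ now decaying at rate $\lambda_0$, the same forcing measured in a fixed analytic norm of radius $\lambda''<\lambda_0$ is integrable in $t$ (the gap $\lambda_0-\lambda''$ converting the phase-mixing weight into $e^{-(\lambda_0-\lambda'')t}$ decay), so $\widehat g_{k,\eta}(t)$ is Cauchy and $g(t,\cdot,\cdot)=f(t,\cdot+vt,v)$ converges to a limit $f_\infty$ in that norm, as claimed.
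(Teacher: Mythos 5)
Your outline follows the same architecture as the paper (profile $g(t,x,v)=f(t,x+vt,v)$, analytic $L^1$-Fourier norms with $d$ derivatives in $\eta$, a radius $\lambda(t)$ shrinking from $2\lambda_0$ to $\lambda_0$, the representation \eqref{nonE} with the torus Green function, and an echo-kernel estimate powered by the Cauchy--Kovalevskaya gain), but there is a genuine gap in your choice of field norm, and it breaks the closing step. Your norm $\|E\|_X=\sum_k\sup_{t}e^{\lambda(t)\langle k,kt\rangle}|\FE_k(t)|$ records only \emph{boundedness} in time of the weighted quantity $F[E](t,\lambda(t))=\sum_k e^{\lambda(t)\langle k,kt\rangle}|\FE_k(t)|$. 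However, in your a priori estimate for $g$, splitting the weight by $\langle k,\eta\rangle\le\langle \ell,\ell t\rangle+\langle k-\ell,\eta-\ell t\rangle$ leaves exactly $F[E](t,\lambda(t))$ --- not $\|E(t)\|_{L^\infty}$ --- in front of the loss-of-derivative term, so the inequality you must absorb is that of Proposition \ref{prop-G0},
\[
\partial_t Gen[g](t,\lambda(t))\le C_0F[E](t,\lambda(t))+\big[\lambda'(t)+(1+t)F[E](t,\lambda(t))\big]\,\partial_z Gen[g](t,\lambda(t)),
\]
and closing it requires $\lambda'(t)+(1+t)F[E](t,\lambda(t))\le0$, i.e. $\int_0^\infty(1+t)F[E](t,\lambda(t))\,dt\le\lambda_0$ (the total radius budget). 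With only $F[E](t,\lambda(t))\le\|E\|_X\lesssim\epsilon_0$ this is impossible: $(1+t)\epsilon_0$ eventually dominates $|\lambda'(t)|\sim\delta\lambda_0(1+t)^{-1-\delta}$ for every $\epsilon_0>0$ and is not integrable; even the zeroth-order source $C_0F[E]$ then fails to be time-integrable, so you do not even get boundedness of $Gen[g]$. Your phrase ``the exponential decay of $E$ keeps the nonlinear contribution small'' conflates the unweighted decay $\|E(t)\|_{L^\infty}\le e^{-\lambda_0 t}\|E\|_X$ (true, but the weight exactly cancels it) with decay of the \emph{weighted} norm, which your norm never asserts. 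This is precisely why the paper's bootstrap \eqref{bootstrapE} postulates decay of the weighted norm itself, $F[E](t,\lambda(t))\le\epsilon e^{-\theta_1\langle t\rangle^{1-\delta}}$, strong enough to guarantee \eqref{cond-lambda}, and why Lemma \ref{lem-ER} (claim \eqref{claimCR}) is designed to propagate that \emph{decay} through the echo term, not mere boundedness.

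A second, more technical defect: you dropped the Sobolev factor $\langle\eta\rangle^\sigma$ from the weight. That factor is what supplies the gain $\min\{\langle kt-\ell s\rangle,\langle \ell s\rangle\}^{-\sigma}$ in the echo kernel \eqref{def-CC}, and it is indispensable in the exact-resonance case $k=\ell$, i.e. the interaction of $\FE_k$ with the zero spatial mode $\Fg_{0,k(t-s)}$: there the resonance sits at $s=t$, where the Cauchy--Kovalevskaya exponential gives nothing, and one only gets $\int_0^t(t-s)e^{-\theta_2|k(t-s)|/\langle t\rangle^{\delta}}\,ds\sim\langle t\rangle^{2\delta}|k|^{-2}$, which is not uniformly bounded in $t$; a bootstrap at a fixed rate $\theta_1$ cannot swallow a $\langle t\rangle^{2\delta}$ prefactor. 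The Sobolev weight gives $\int_0^t(t-s)\langle k(t-s)\rangle^{-\sigma}\,ds\le C|k|^{-2}$ outright (and similarly rescues the other $s\ge t/2$ cases in the paper's case analysis). Both defects are repairable and lead back to the paper's actual scheme: strengthen the $E$-norm so that it carries the decay factor $e^{\theta_1\langle t\rangle^{1-\delta}}$ (the paper remarks that polynomial decay of $F[E](t,\lambda(t))$ would already suffice for the absorption), and reinstate $\langle\eta\rangle^\sigma$; with those changes your proposal becomes essentially the paper's proof.
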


To prove the theorem, we measure the analytic regularity of solutions using "generator functions". Namely,
for $z\ge 0$, we introduce
$$ 
Gen[g](z) := \sum_{|\alpha|\le d}\sum_{k\in \ZZ^d}  \int_{\RR^d} A_{k,\eta}(z)
|\partial^\alpha_\eta \widehat g_{k,\eta}|\; d \eta , \qquad A_{k,\eta}(z): = e^{z\langle k,\eta\rangle} \langle \eta\rangle^{\sigma} 
$$
where $\Fg_{k,\eta}$ denotes the Fourier transform of $g(x,v)$ in variables $x,v$, respectively, and $A_{k,\eta}(z)$ are classical analytic-Sobolev weight functions, with the usual bracket notation $\langle k,\eta\rangle = \sqrt{1+|k|^2+|\eta|^2}$. The additional Sobolev weight with $\sigma\ge 1$ is a classical way of quasi-linearizing the system to avoid double loss of derivatives (e.g. when treating $\partial_xE\partial_v f$). The only modification, compared to those introduced in \cite{GNR1}, is the use of $L^1$ norms, instead of $L^2$ norms where integration by parts was needed to avoid the apparent loss of derivatives in \eqref{VP-pert}. In the case of analytic data, one loss is allowed, and hence no structure of the transport equation is needed, for which we believe the framework can be adapted to other contexts. In addition, $L^1$ norms give a convenient algebra when treating products. Indeed, recalling $\widehat{(fg)}_{k,\eta} = \Ff_{k,\eta}\star_{k,\eta} \Fg_{k,\eta}$ and using the Hausdorff-Young inequality for convolution in $L^1$ norms, we have  
\beq \label{prodgen}
Gen[f g] \le Gen[f] Gen[g],
\eeq
upon noting the basic product property of the weight functions \begin{equation}\label{Aweight}
A_{k,\eta} \le A_{k',\eta'} A_{k-k', \eta-\eta'}
\end{equation}
for any $k,k'$ and $\eta, \eta'$. In addition, the norms are also well adapted to derivatives:
\beq \label{dxgen}
Gen[\partial_x g] \le \partial_z Gen[ g ], \qquad Gen[\partial_v g] \le \partial_z Gen[g] .
\eeq
Using the norm properties, we can turn the transport equation \eqref{VP-pert} into a transport inequality for the norms, which can then be solved for an appropriate time-dependent analyticity radius $z(t)$. Precisely, we first obtain the following. 

\begin{proposition}\label{prop-G0} Let $(E,f)$ solve \eqref{VP-pert}, and set $
	g(t,x,v) = f(t,x + v t, v).
	$ Then, there holds the following differential inequality
	$$
	\partial_t Gen[g(t)](z) \le C_0 F[E](t,z)  + (1+t) F[E](t,z) \partial_z Gen[g(t)](z)
	$$
	for some universal constant $C_0$, where 
	\begin{equation}\label{def-Nrho}
	F[E](t,z) :=  \sum_{k\in \ZZ^d\setminus\{0\}}  A_{k,kt}(z) |\FE_k(t)| .
	\end{equation}
	 
\end{proposition}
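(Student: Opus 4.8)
The plan is to derive the equation for the profile $g(t,x,v)=f(t,x+vt,v)$, pass to the Fourier side, and then estimate $\partial_t Gen[g]$ contribution by contribution. First I would differentiate $g$ in time: the chain rule gives $\partial_t g=(\partial_t f+v\cdot\nabla_x f)(t,x+vt,v)$, so \eqref{VP-pert} yields
\[
\partial_t g(t,x,v)=-E(t,x+vt)\cdot\nabla_v\mu(v)-E(t,x+vt)\cdot(\nabla_v f)(t,x+vt,v).
\]
Since $\nabla_v-t\nabla_x$ is invariant under the free transport \eqref{free}, we have $(\nabla_v f)(t,x+vt,v)=(\nabla_v-t\nabla_x)g(t,x,v)$, which closes the equation. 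Taking the Fourier transform in $(x,v)$ and using that $E(t,\cdot)$ depends on $x$ only --- so $E(t,x+vt)$ sits at $v$-frequency $\eta=kt$ in each mode $k$ --- the products become convolutions:
\[
\partial_t\widehat g_{k,\eta}=-i\,(\eta-kt)\cdot\widehat E_k(t)\,\widehat\mu(\eta-kt)-i\,(\eta-kt)\cdot\sum_{k_1}\widehat E_{k_1}(t)\,\widehat g_{k-k_1,\eta-k_1 t},
\]
where the single factor $(\eta-kt)$ survives because, writing $(k_2,\eta_2)=(k-k_1,\eta-k_1 t)$, one has $\eta_2-tk_2=\eta-tk$; moreover $\widehat E_0=0$ by neutrality, so effectively $k_1\ne0$.

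Next, since $A_{k,\eta}(z)$ is $t$-independent and $\partial_t|w|\le|\partial_t w|$, I would bound $\partial_t Gen[g](z)\le\sum_{|\alpha|\le d}\sum_k\int A_{k,\eta}(z)|\partial^\alpha_\eta\partial_t\widehat g_{k,\eta}|\,d\eta$ and substitute the two terms above. For the linear forcing, $\widehat E_k$ carries no $\eta$-dependence, so all $\eta$-derivatives fall on $\psi(\eta-kt):=(\eta-kt)\widehat\mu(\eta-kt)$; splitting the weight via \eqref{Aweight} as $A_{k,\eta}\le A_{k,kt}A_{0,\eta-kt}$ and changing variables $\eta'=\eta-kt$ factors this contribution as $\big(\sum_k A_{k,kt}(z)|\widehat E_k(t)|\big)\cdot\sum_{|\alpha|\le d}\int A_{0,\eta'}(z)|\partial^\alpha_{\eta'}\psi(\eta')|\,d\eta'=C_0\,F[E](t,z)$, with $C_0$ finite because analyticity of $\mu$ makes $\psi$ exponentially localized (for $z$ below the radius of analyticity). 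This produces the first term.

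The heart of the matter is the nonlinear term. I would apply Leibniz to $\partial^\alpha_\eta[(\eta-kt)\widehat g_{k-k_1,\eta-k_1 t}]$; as $(\eta-kt)$ is affine, this splits into the main piece $(\eta-kt)\,\partial^\alpha_\eta\widehat g$ and one lower-order piece $\partial^{\alpha-e_j}_\eta\widehat g$ per coordinate $j$ (with a unit multi-index $e_j$, carrying no growth factor). Splitting the weight as $A_{k,\eta}\le A_{k_1,k_1 t}A_{k-k_1,\eta-k_1 t}$ and changing variables to $(k_2,\eta_2)$, the electric factors assemble into $\sum_{k_1}A_{k_1,k_1 t}(z)|\widehat E_{k_1}(t)|=F[E](t,z)$. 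For the main piece the decisive estimate is
\[
|\eta-kt|=|\eta_2-tk_2|\le|\eta_2|+t|k_2|\le(1+t)\langle k_2,\eta_2\rangle,
\]
and since $\partial_z A_{k_2,\eta_2}=\langle k_2,\eta_2\rangle A_{k_2,\eta_2}$, the residual $g$-sum is precisely $\partial_z Gen[g]$, so this piece contributes $(1+t)F[E]\,\partial_z Gen[g]$. The lower-order pieces give a multiple of $F[E]\,Gen[g]$, absorbed using $Gen[g]\le\partial_z Gen[g]$ (valid since $\langle k,\eta\rangle\ge1$). Collecting everything gives the stated inequality.

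The main obstacle I anticipate is the bookkeeping in the last step: one must check that, after the shift $\eta_2=\eta-k_1 t$, the full-derivative term $\partial^\alpha_\eta\widehat g_{k-k_1,\eta-k_1 t}$ lands exactly on the $|\alpha|$-summand of $Gen[g]$ (so that $|\alpha|\le d$ is preserved), that the identity $\eta_2-tk_2=\eta-tk$ leaves only the single factor $(\eta-kt)$ with no spurious powers of $|\eta|$ or $t$, and that the affine structure of $(\eta-kt)$ confines the Leibniz expansion to one term per coordinate, so the genuine growth is absorbed by $(1+t)\partial_z$ and nothing worse. Note that the analyticity of $\mu$ enters only through the finiteness of $C_0$, and that the whole estimate tolerates the single derivative loss coming from $(\eta-kt)$ --- which is precisely why, in the analytic case, no integration by parts or $L^2$ structure is needed.
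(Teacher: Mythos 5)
Your proof is correct and takes essentially the same route as the paper's: derive the transported equation for $g$, use that the Fourier transform of $E(t,x+vt)$ is $\widehat{E}_k(t)\delta_{\eta=kt}$, split the weights via \eqref{Aweight}, and bound the free-streaming derivative $(\nabla_v-t\nabla_x)$ by $(1+t)\partial_z$ --- you are simply unpacking at the Fourier level what the paper packages as the norm properties \eqref{prodgen}--\eqref{dxgen}. The only (harmless) discrepancy is that your Leibniz lower-order terms yield a coefficient $(1+t+C)$ rather than exactly $(1+t)$, a dimension-dependent correction that the paper's own property \eqref{dxgen} silently absorbs as well and which is immaterial for the subsequent bootstrap.
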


\begin{proof} By construction, we have 
	\begin{equation}\label{VP-g} 
	\partial_t g  = - E(t,x+vt) \cdot \nabla_v \mu(v) - E(t,x+vt) \cdot (\nabla_v - t \nabla_x) g .
	\end{equation}
The proposition thus follows, upon using the norm properties \eqref{prodgen}-\eqref{dxgen}, and the fact that the Fourier transform of $E(t,x+vt)$ in $x$ and $v$ is equal to $\FE_k(t)\delta_{\eta = kt}$. The ``norm'' $F[E](t,z)$ is therefore  the generator norm $Gen[\cdot]$ of the shifted electric field 
$E(t,x+vt)$, and the constant $C_0$ is the generator norm of $\nabla_v \mu$.	
\end{proof}

\subsection{Nonlinear iterative scheme}

The proof of Theorem \ref{theo-LD} now proceeds via an iterative scheme as follows. Fix sufficiently small constants $\theta_1, \delta>0$, and take a time-dependent analyticity radius $\lambda(t)=\lambda_0(1+(1+t)^{-\delta})$. We start with a bootstrap assumption on the electric field 
\begin{equation}\label{bootstrapE} 
F[E](t,\lambda(t)) \le \epsilon e^{-\theta_1 \langle t\rangle^{1-\delta}} 
\end{equation}
for all $t\in [0,T]$ for some positive $T$. We shall prove that, for sufficiently small $\epsilon$, the equality in \eqref{bootstrapE} is in fact never attained, and therefore the bootstrap argument can be continued for all times $t\ge 0$. The iterative scheme is divided into the following two steps. 

\begin{proposition}[Boundedness of generator norms]\label{prop-g}
Let $\epsilon_0 = Gen[f_0](2\lambda_0)$. Under the bootstrap assumption \eqref{bootstrapE}, there holds 
\begin{equation}
\label{bd-g}  Gen[g](t,\lambda(t)) \le \epsilon_0 + C_1 \epsilon,
\end{equation}
for some universal constant $C_1$. 
\end{proposition}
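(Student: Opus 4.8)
The plan is to integrate the differential inequality from Proposition~\ref{prop-G0} in time, evaluated along the moving analyticity radius $z=\lambda(t)$, and to absorb the dangerous transport term using the fact that $\lambda(t)$ is \emph{decreasing}. The starting point is the observation that if we define $H(t):=Gen[g(t)](\lambda(t))$, then by the chain rule
\beq
\frac{d}{dt} H(t) = \big(\partial_t Gen[g(t)]\big)(\lambda(t)) + \lambda'(t)\,\big(\partial_z Gen[g(t)]\big)(\lambda(t)).
\eeq
Since $\lambda(t)=\lambda_0(1+(1+t)^{-\delta})$ satisfies $\lambda'(t)=-\delta\lambda_0(1+t)^{-\delta-1}<0$, the second term is a negative multiple of $\partial_z Gen[g(t)](\lambda(t))$, which is exactly the quantity that appears with a positive coefficient in the transport term of Proposition~\ref{prop-G0}. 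Substituting the differential inequality gives
\beq
\frac{d}{dt} H(t) \le C_0\, F[E](t,\lambda(t)) + \Big[(1+t)\,F[E](t,\lambda(t)) + \lambda'(t)\Big]\,\partial_z Gen[g(t)](\lambda(t)).
\eeq

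First I would use the bootstrap assumption \eqref{bootstrapE} to control $(1+t)F[E](t,\lambda(t))$. The key point is the quantitative decay: $(1+t)F[E](t,\lambda(t)) \le \epsilon(1+t)e^{-\theta_1\langle t\rangle^{1-\delta}}$, and I want to show this is dominated by $|\lambda'(t)|=\delta\lambda_0(1+t)^{-\delta-1}$ so that the coefficient of $\partial_z Gen[g]$ is non-positive and the whole transport term can be discarded. This comparison, however, is the crux and does not hold for all $t$: near $t=0$ the factor $(1+t)e^{-\theta_1\langle t\rangle^{1-\delta}}$ is of order $\epsilon$ while $|\lambda'(t)|$ is of order $\delta\lambda_0$, so for small but fixed $\epsilon$ the sign is indeed favorable; but one must check it uniformly in $t$. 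Comparing $\epsilon(1+t)e^{-\theta_1\langle t\rangle^{1-\delta}}$ with $\delta\lambda_0(1+t)^{-\delta-1}$ amounts to checking $\epsilon(1+t)^{2+\delta}e^{-\theta_1\langle t\rangle^{1-\delta}}\le \delta\lambda_0$, and since the exponential $e^{-\theta_1\langle t\rangle^{1-\delta}}$ beats any polynomial, the left side is uniformly bounded by $C\epsilon$; hence for $\epsilon$ small (relative to $\delta\lambda_0$) the coefficient of $\partial_z Gen[g]$ is $\le 0$. This is the main obstacle, since it is precisely where the prefactor $(1+t)$ in the echo term is defeated by the slow shrinkage of the radius, and it forces the $(1+t)^{-\delta}$ shape of $\lambda(t)$.

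Once the transport term is dropped, the inequality reduces to $\frac{d}{dt}H(t)\le C_0\,F[E](t,\lambda(t))\le C_0\epsilon\, e^{-\theta_1\langle t\rangle^{1-\delta}}$. Integrating from $0$ to $t$ and using $\int_0^\infty e^{-\theta_1\langle s\rangle^{1-\delta}}\,ds =: C_\theta<\infty$ yields
\beq
H(t) \le H(0) + C_0 C_\theta\, \epsilon.
\eeq
Finally I would identify the initial value: $H(0)=Gen[g(0)](\lambda(0))=Gen[f_0](2\lambda_0)=\epsilon_0$, using $g(0,x,v)=f(0,x,v)=f_0(x,v)$ and $\lambda(0)=\lambda_0(1+1)=2\lambda_0$, which is exactly why the hypothesis is phrased at radius $2\lambda_0$. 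Setting $C_1:=C_0C_\theta$ gives the claimed bound $Gen[g](t,\lambda(t))\le \epsilon_0+C_1\epsilon$, completing the argument. The only routine points left are verifying the constant $C_\theta$ is finite (immediate, as $1-\delta>0$) and that $\partial_z Gen[g]\ge 0$ so that multiplying by the non-positive coefficient preserves the inequality direction (clear, since $A_{k,\eta}(z)$ is increasing in $z$).
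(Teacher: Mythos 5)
Your proof is correct and follows essentially the same route as the paper: evaluate $Gen[g]$ along the moving radius $z=\lambda(t)$, use the bootstrap assumption together with the monotone shrinking of $\lambda(t)$ to make the coefficient of $\partial_z Gen[g]$ non-positive (the paper's condition \eqref{cond-lambda}), discard that term, and integrate the remaining inequality using $H(0)=Gen[f_0](2\lambda_0)=\epsilon_0$. In fact you supply a detail the paper leaves implicit, namely the explicit verification that $\epsilon(1+t)^{2+\delta}e^{-\theta_1\langle t\rangle^{1-\delta}}\le \delta\lambda_0$ for $\epsilon$ small, which is exactly what makes \eqref{cond-lambda} hold uniformly in $t$.
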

\begin{proof} Indeed, we use Proposition \ref{prop-G0} to evaluate the generator norm along $z = \lambda(t)$ (i.e. following the ``characteristic'' of the transport inequality), namely
	$$ \frac{d}{dt} Gen[g](t,\lambda(t)) \le C_0 F[E](t,\lambda(t))  + \Big[ \lambda'(t) + (1+t) F[E](t,\lambda(t)) \Big]\partial_z Gen[g(t)](\lambda(t)).$$
	Next, using the bootstrap assumption \eqref{bootstrapE} and the choice of the slowly decaying analyticity radius $\lambda(t)=\lambda_0(1+(1+t)^{-\delta})$, we have 
	\begin{equation}\label{cond-lambda}  
	\lambda'(t) + (1+t) F[E](t,\lambda(t)) \le 0. 
	\end{equation}
	which implies $\frac{d}{dt} Gen[g](t,\lambda(t)) \le C_0 F[E](t,\lambda(t))$. Integrating this last inequality in time and recalling $g(0,x,v) =f_0(x,v)$, we obtain the proposition. 	
\end{proof}

\begin{proposition}[Decay of electric field]\label{prop-decayE} 
	Let $\epsilon_0 = Gen[f_0](2\lambda_0)$ and $\theta_1>0$ as in \eqref{bootstrapE}. Under the boundedness of the generator norm \eqref{bd-g}, there holds
\begin{equation}\label{DecayE} 
F[E](t,\lambda(t)) \le C_2(\epsilon_0 + \epsilon^2) e^{-\theta_1 \langle t\rangle^{1-\delta}} 
\end{equation}
for some universal constant $C_2$.
\end{proposition}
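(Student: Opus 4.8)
The plan is to run the estimate directly on the Green-function representation \eqref{nonE}, $E = G\star_{t,x}E^0 + G\star_{t,x}E^R$, by bounding the two sources $E^0$ and $E^R$ in the time-decaying norm $F[\cdot](\tau,\lambda(\tau))$ and then transferring those bounds through the convolution with $G$. In Fourier in $x$, the linear source $E^0$ is the free field generated by the initial perturbation, hence linear in $f_0$ and of size $O(\epsilon_0)$, whereas $E^R=-\nabla_x\Delta_x^{-1}\nabla_x\cdot\cE^R$ is the genuinely quadratic reaction term built from the product $[Ef]$ through \eqref{ER}, hence of size $O(\epsilon^2)$ once the bootstrap \eqref{bootstrapE} and the bound \eqref{bd-g} are inserted. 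So I would establish the two source estimates $F[E^0](\tau,\lambda(\tau))\lesssim\epsilon_0 e^{-\theta_1\langle\tau\rangle^{1-\delta}}$ and $F[E^R](\tau,\lambda(\tau))\lesssim(\epsilon_0+\epsilon^2)e^{-\theta_1\langle\tau\rangle^{1-\delta}}$, together with a lemma that $G$ acts boundedly on this norm, and combine them.

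For the transfer lemma I use the decomposition \eqref{decomp-FG} (torus analogue of Theorem \ref{theo-linLD}): convolution with the $\delta(t)$ term returns the source, so it suffices to control $\FG^r$, the oscillatory part being absent for the positive Penrose-stable equilibria of \cite{MV} (or strictly damped, and then handled identically). From \eqref{bd-Gr} one has $|\FG^r_k(t-\tau)|\lesssim e^{-\theta_0\langle k(t-\tau)\rangle}$, while the weight ratio $A_{k,kt}(\lambda(t))/A_{k,k\tau}(\lambda(\tau))$ grows at most like $e^{2\lambda_0|k|(t-\tau)}$ (up to harmless polynomial factors) since $\lambda(\tau)\ge\lambda(t)$. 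Provided the analyticity radius of $\mu$ dominates that of the solution, i.e.\ $\theta_0>2\lambda_0$ (ensured by taking $\lambda_0$ small), the resulting kernel $e^{-(\theta_0-2\lambda_0)|k|(t-\tau)}$ is integrable and concentrated near $\tau=t$, where $e^{\theta_1(\langle t\rangle^{1-\delta}-\langle\tau\rangle^{1-\delta})}\le e^{\theta_1(t-\tau)}$, so the stretched-exponential decay is transferred for $\theta_1$ small. The linear bound is then immediate: $\FE^0_k(\tau)$ is proportional to $|k|^{-1}\Ff_{0,k,k\tau}$, and passing from the $L^1_\eta$ norm defining $\epsilon_0=Gen[f_0](2\lambda_0)$ to the pointwise value at $\eta=k\tau$ costs nothing thanks to the Sobolev embedding $W^{d,1}_\eta\hookrightarrow L^\infty_\eta$ (the reason $|\alpha|\le d$ derivatives are carried); the radius gap $2\lambda_0-\lambda(\tau)\ge\lambda_0(1-(1+\tau)^{-\delta})$ then yields decay $e^{-c\langle\tau\rangle}$, far stronger than needed.

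The heart of the matter is the reaction term $E^R$. Using \eqref{ER-Fr} and the identity $\Ff_{k_2,k(t-s)}(s)=\Fg_{k_2,kt-k_1 s}(s)$, I would multiply $\FcE^R_k(t)$ by $A_{k,kt}(\lambda(t))$, write $A_{k,kt}(\lambda(t))=e^{-(\lambda(s)-\lambda(t))\langle k,kt\rangle}A_{k,kt}(\lambda(s))$, and split the last weight by \eqref{Aweight} along the exact decomposition $(k,kt)=(k_1,k_1 s)+(k_2,kt-k_1 s)$. This exposes an $F[E]$-factor at $(k_1,k_1 s)$ and a $Gen[g]$-factor at $(k_2,kt-k_1 s)$, both at the larger radius $\lambda(s)\ge\lambda(t)$, times the gain $e^{-(\lambda(s)-\lambda(t))\langle k,kt\rangle}$; the Sobolev weights satisfy $\langle kt\rangle^\sigma\langle k_1 s\rangle^{-\sigma}\langle kt-k_1 s\rangle^{-\sigma}\le1$. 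Bounding the $g$-factor at its resonant $\eta$ by the $L^1_\eta$ generator norm (again via $W^{d,1}_\eta\hookrightarrow L^\infty_\eta$), summing in $k_1,k_2$, and using $|k|\ge1$ to bound $\langle k,kt\rangle\ge\langle t\rangle$ turns the estimate into the scalar inequality
\[
F[E^R](t,\lambda(t))\lesssim\int_0^t(t-s)\,e^{-(\lambda(s)-\lambda(t))\langle t\rangle}\,F[E](s,\lambda(s))\,Gen[g](s,\lambda(s))\,ds .
\]
Inserting \eqref{bootstrapE} and \eqref{bd-g} makes the product $F[E]\cdot Gen[g]$ of order $\epsilon(\epsilon_0+\epsilon)=O(\epsilon_0+\epsilon^2)$, and the radius loss $\lambda(s)-\lambda(t)=\lambda_0[(1+s)^{-\delta}-(1+t)^{-\delta}]$ supplies the decay that absorbs the echo amplification $(t-s)$; carrying this out so as to recover the \emph{full} rate $e^{-\theta_1\langle t\rangle^{1-\delta}}$ is the delicate step.

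The main obstacle is exactly this last point: the linear-in-time factor $(t-s)$ in the collective integral \eqref{ER}, which is the analytic fingerprint of plasma echoes. The mechanism that defeats it is the slowly decaying radius $\lambda(t)=\lambda_0(1+(1+t)^{-\delta})$, whose loss produces the gain $e^{-(\lambda(s)-\lambda(t))\langle k,kt\rangle}$. The difficulty is that the crude bound $\langle k,kt\rangle\ge\langle t\rangle$ degenerates the gain to rate $\langle t\rangle^{-\delta}$, so that $\int_0^t(t-s)e^{-c\langle t\rangle^{-\delta}(t-s)}\,ds$ contributes a polynomial factor; to avoid degrading $\theta_1$ one must retain the full frequency dependence of the gain and the resonance localization of $s$ near $k_1 s=kt$, precisely as in the echo analysis of \cite{MV,GNR1}. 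The remaining bookkeeping — choosing $\theta_1$ small relative to $\lambda_0\delta$ and to $\theta_0-2\lambda_0$, and verifying $C_2(\epsilon_0+\epsilon^2)<\epsilon$ to continue the bootstrap — is routine.
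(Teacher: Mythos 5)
Your framework coincides with the paper's own proof: the same decomposition $E = G\star_{t,x}E^0 + G\star_{t,x}E^R$, the same transfer lemma for $G$ (delta part plus exponentially decaying remainder, under $\theta_0>2\lambda_0$), the same treatment of $E^0$ via the $W^{d,1}_\eta\hookrightarrow L^\infty_\eta$ embedding, and the same weight-splitting of the reaction term exposing the gain $e^{-(\lambda(s)-\lambda(t))\langle k,kt\rangle}$. The gap is that you never prove the one estimate that constitutes the substance of the proposition: the suppression of the echo factor $(t-s)$. Your executed bound --- replacing $\langle k,kt\rangle$ by $\langle t\rangle$ and discarding the Sobolev weights --- yields, as you yourself note, a polynomial loss $\langle t\rangle^{2\delta}$ from $\int_0^t(t-s)e^{-c(t-s)/\langle t\rangle^{\delta}}\,ds$, and this cannot close the bootstrap at rate $\theta_1$: the output $\epsilon(\epsilon_0+\epsilon)\langle t\rangle^{2\delta}e^{-\theta_1\langle t\rangle^{1-\delta}}$ exceeds $\epsilon e^{-\theta_1\langle t\rangle^{1-\delta}}$ for large $t$ no matter how small $\epsilon$ is, and weakening the bootstrap norm to absorb $\langle t\rangle^{2\delta}$ just loses another such factor at the next iteration. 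Deferring to ``the echo analysis of \cite{MV,GNR1}'' and declaring the rest routine bookkeeping leaves the heart of the proof missing.

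Moreover, the missing step is not the resonance-localization machinery of \cite{MV} that you invoke; in this framework it is an elementary trichotomy, for which you need precisely the Sobolev weight $\min\{\langle kt-\ell s\rangle,\langle\ell s\rangle\}^{-\sigma}$ that you threw away. Keep $|k|$ in the gain, i.e. $e^{-2\theta_2|k(t-s)|/\langle t\rangle^{\delta}}$ (legitimate since $\lambda(s)-\lambda(t)\gtrsim(t-s)/\langle t\rangle^{1+\delta}$ while $\langle k,kt\rangle\gtrsim |k|\langle t\rangle$). Half of it converts $e^{-\theta_1\langle s\rangle^{1-\delta}}$ into $e^{-\theta_1\langle t\rangle^{1-\delta}}$ (your step, valid for $\theta_1\le\theta_2$ because $k\neq0$); the other half, together with the Sobolev weight, bounds $\int_0^t(t-s)(\cdots)\,ds$ by a constant: if $s\le t/2$ the gain is $\le e^{-\frac12\theta_2\langle t\rangle^{1-\delta}}$; if $s\ge t/2$ and either $\langle \ell s\rangle\ge t/2$ or $\langle kt-\ell s\rangle\ge t/4$, the weight contributes $\langle t\rangle^{-\sigma}$, which beats $(t-s)\le t$ since $\sigma\ge1$; in the genuinely resonant regime $|kt-\ell s|\le t/4$ with $k\neq\ell$, the frequency mismatch forces $|k(t-s)|\ge|k-\ell|s-|kt-\ell s|\ge t/4$, so the gain is again $e^{-\frac14\theta_2\langle t\rangle^{1-\delta}}$; and for $k=\ell$ the weight $\langle k(t-s)\rangle^{-\sigma}$ makes the integral converge outright. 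This case analysis is exactly the paper's claim \eqref{claimCR1}; without it (or an equivalent), your proposal does not prove the proposition.
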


We postpone to give the proof of Proposition \ref{prop-decayE} in the next section. We remark that a polynomial decay of the generator norm of $E(t)$ is sufficient for the nonlinear analysis as long as the shrinking of analyticity radius is in control, namely the inequality \eqref{cond-lambda} holds. Note however that an exponential decay of $E(t)$ is encoded in the generator norm, which is essential in suppressing plasma echoes.  
 
\begin{proof}[Proof of Theorem \ref{theo-LD}]
Let us now give the proof of Theorem \ref{theo-LD} using the results from Propositions \ref{prop-g}-\ref{prop-decayE}. Indeed, we may choose $\epsilon_0 \ll\epsilon\ll1$, if necessary, so that $C_2(\epsilon_0 + \epsilon^2) < \epsilon$, and therefore the equality in the bootstrap assumption \eqref{bootstrapE} never occurs, leading to global-in-time solutions that satisfy \eqref{bootstrapE} for all times $t\ge 0$. Next, we note that by definition of the $F$-norm \eqref{def-Nrho} and the fact that $A_{k,kt}(\lambda(t))\ge e^{\lambda_0 \langle k, kt\rangle} \ge e^{\lambda_0 t}$ for $k\in \ZZ^d \setminus\{0\}$, we have $\|E(t)\|_{L^\infty} \le e^{-\lambda_0 t} F[E](t,\lambda(t)) $, yielding the desired decay for the electric field. Finally, the scattering of $f$ is equivalent to the convergence of $g(t,x,v)$ in the large time, which follows from the boundedness of $g$ in the analytic norms \eqref{bd-g}.
This ends the proof of Theorem \ref{theo-LD}.
\end{proof}

%We first note the following, which in particular implies $F[\rho](t,z) \le G[g(t)]^{1/2}(z)$. 
%
%
%\begin{lemma} 
%For any $z\ge 0$, there holds 
%\begin{equation}\label{sup-g} \sup_{k,\eta}   | \Fg_{k,\eta}|  \le G[g](z)^{1/2}. \end{equation}
%\end{lemma}
%\begin{proof} Set $ A_{k,\eta}(z) = e^{z\langle k,\eta\rangle^\gamma} \langle k,\eta\rangle^{\sigma} $. We compute 
%$$
%\begin{aligned} 
% A^2_{k,\eta} | \Fg_{k,\eta}|^2 
%&\le A^2_{k,\eta}\int_{|\eta'| \ge |\eta|} |\Fg_{k,\eta'}||\partial_\eta \Fg_{k,\eta'}| \; d\eta' 
%\le \int_\RR A_{k,\eta'}^2 |\Fg_{k,\eta'}|
%| \partial_\eta \Fg_{k,\eta'}| \; d\eta' 
%\\& \le \sum_{k'\in \ZZ}  \int_\RR A_{k',\eta'}^2 \Big( |\Fg_{k',\eta'}|^2 + 
%| \partial_\eta \Fg_{k',\eta'}|^2 \Big)  \; d\eta',
%\end{aligned}
%$$ giving the lemma. 
%\end{proof}
%

%%%%%%%%%%%%%%%%%%%%%%%%%%

\subsection{Decay of electric field}\label{sec-decayE}

%%%%%%%%%%%%%%%%%%%%%%%%%%

It remains to prove Proposition \ref{prop-decayE} establishing decay of the electric field under the boundedness of the generator norm \eqref{bd-g}. Indeed, we first recall the electric formulation \eqref{nonE} that 
\begin{equation}\label{recallE}
E(t) = G\star_{t,x}E^0(t,x) + G\star_{t,x}E^R(t,x)
\end{equation}  	
in which $G(t,x)$ is the Green function for the linearized problem as established in Theorem \ref{theo-linLD}, while $E^0(t,x)$ is the free electric field generated by \eqref{free} with the same initial data $f_0(x,v)$, and $E^R=  -\nabla_x\Delta_x^{-1}\nabla_x \cdot \cE^R$, with $\cE^R$ being defined as in \eqref{ER}. We stress that we are treating the torus case with analytic equilibria, and therefore no oscillatory component occurs in the Green function: namely, there holds 
\begin{equation}
\label{re-Green}\FG_k(t) = \delta(t) + \FG^r_k(t), \qquad |\FG^r_k(t)| \le C \langle k\rangle^{-1}e^{-\theta_0\langle kt\rangle},
\end{equation}
uniformly in $k\in \ZZ^d\setminus\{0\}$, for some positive constants $\theta_0,C$. The constant $\theta_0$ depends on the analyticity radius of equilibria $\mu(v)$, which we assume that $\theta_0 >2\lambda_0$. 

\begin{lemma}\label{lem-GS} Let $G$ satisfy \eqref{re-Green}. Then, there holds
	\begin{equation}\label{bd-ES}\begin{aligned}
F[G\star_{t,x}S](t,\lambda(t)) \le F[S](t,\lambda(t)) + C\int_0^t e^{-2\theta_1 (t-s)}F[S](s,\lambda(s))\; ds 
\end{aligned}\end{equation}
for any source term $S(t,x)$ and for some positive constants $\theta_1,C$.  	
\end{lemma}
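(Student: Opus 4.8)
The plan is to pass to the Fourier side in $x$, where the space-time convolution $G\star_{t,x}S$ becomes a product in $k$ and a convolution in $t$, so that $\widehat{(G\star_{t,x}S)}_k(t) = \int_0^t \FG_k(t-s)\FS_k(s)\,ds$ (functions extended by zero for $t<0$). Feeding in the Green function decomposition \eqref{re-Green}, the Dirac mass reproduces $\FS_k(t)$ exactly while the remainder produces $\int_0^t \FG^r_k(t-s)\FS_k(s)\,ds$. Applying the definition \eqref{def-Nrho} of $F[\cdot]$ and the triangle inequality, the $\delta(t)$ contribution gives precisely $F[S](t,\lambda(t))$, so it remains to bound $\sum_{k\neq0}A_{k,kt}(\lambda(t))\int_0^t|\FG^r_k(t-s)||\FS_k(s)|\,ds$ by the stated convolution term.

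The heart of the argument is the pointwise weight comparison
\[
A_{k,kt}(\lambda(t))\,|\FG^r_k(t-s)| \le C\, e^{-2\theta_1(t-s)}\, A_{k,ks}(\lambda(s)),
\]
valid for $0\le s\le t$ and $k\in\ZZ^d\setminus\{0\}$. To establish it I would factor $A_{k,kt}(\lambda(t)) = A_{k,ks}(\lambda(s))\,e^{\lambda(t)\langle k,kt\rangle-\lambda(s)\langle k,ks\rangle}\,\langle kt\rangle^{\sigma}\langle ks\rangle^{-\sigma}$ and control each factor separately. Since $\lambda(t)=\lambda_0(1+(1+t)^{-\delta})$ is decreasing, the term $(\lambda(t)-\lambda(s))\langle k,ks\rangle\le0$ can be dropped, leaving $\lambda(t)[\langle k,kt\rangle-\langle k,ks\rangle]$; the subadditivity $\langle k,kt\rangle\le\langle k,ks\rangle+\langle k(t-s)\rangle$ (which is the same inequality underlying \eqref{Aweight}) together with $\lambda(t)\le 2\lambda_0$ then yields $e^{\lambda(t)\langle k,kt\rangle-\lambda(s)\langle k,ks\rangle}\le e^{2\lambda_0\langle k(t-s)\rangle}$.

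Combining this with the Green function bound $|\FG^r_k(t-s)|\le C\langle k\rangle^{-1}e^{-\theta_0\langle k(t-s)\rangle}$ from \eqref{re-Green} produces the factor $e^{-(\theta_0-2\lambda_0)\langle k(t-s)\rangle}$, where the gap $\theta_0-2\lambda_0>0$ is exactly the hypothesis imposed below \eqref{re-Green}. The polynomial ratio obeys $\langle kt\rangle^{\sigma}\langle ks\rangle^{-\sigma}\le C\langle k(t-s)\rangle^{\sigma}$ by the Peetre-type inequality $\langle kt\rangle\le\sqrt2\,\langle ks\rangle\langle k(t-s)\rangle$, and the factor $\langle k\rangle^{-1}\le1$ is harmless; both are absorbed by shrinking the rate slightly, using $\langle k(t-s)\rangle^{\sigma}e^{-(\theta_0-2\lambda_0)\langle k(t-s)\rangle}\le C e^{-2\theta_1\langle k(t-s)\rangle}$ for any fixed $2\theta_1<\theta_0-2\lambda_0$. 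Finally, since $|k|\ge1$ on the torus gives $\langle k(t-s)\rangle\ge(t-s)$, I may replace $e^{-2\theta_1\langle k(t-s)\rangle}$ by $e^{-2\theta_1(t-s)}$, which closes the pointwise comparison.

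With the pointwise bound in hand, summing over $k\in\ZZ^d\setminus\{0\}$ and interchanging the sum with the $s$-integral converts $\sum_{k}A_{k,ks}(\lambda(s))|\FS_k(s)|$ back into $F[S](s,\lambda(s))$, delivering the claimed inequality. The only genuine obstacle is the weight comparison itself: one must carefully exploit the monotonicity of $\lambda(t)$ and the bracket subadditivity so that all growth of the weight in the exponent is captured by $\langle k(t-s)\rangle$ and then defeated by the spectral gap $\theta_0>2\lambda_0$. Once this is arranged, the Sobolev and $\langle k\rangle^{-1}$ factors are routine and the summation is immediate.
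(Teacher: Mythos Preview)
Your proposal is correct and follows essentially the same approach as the paper: decompose $\FG_k$ via \eqref{re-Green}, use the weight product inequality \eqref{Aweight} (which you unpack as the bracket subadditivity plus Peetre), the monotonicity of $\lambda(t)$ to pass from $A_{k,ks}(\lambda(t))$ to $A_{k,ks}(\lambda(s))$, and then absorb $A_{0,k(t-s)}(\lambda(t))$ into the Green function decay using $\theta_0>2\lambda_0$. If anything, your write-up is more explicit than the paper's in extracting the surviving factor $e^{-2\theta_1(t-s)}$ (the paper states only $A_{0,k(t-s)}(t)e^{-\theta_0\langle k(t-s)\rangle}\le C$ and leaves the exponential decay implicit), so there is no gap.
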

\begin{proof}
Indeed, recalling $F$-norms \eqref{def-Nrho} and using the decomposition \eqref{re-Green}, we compute 
	$$	
\begin{aligned}
F[G\star_{t,x}S](t,\lambda(t)) 
&\le  \sum_{k\in \ZZ^d\setminus \{0\}} A_{k,kt}(t) \Big[  |\FS_k(t)| + C \int_0^t e^{-\theta_0\langle |k|(t-s)\rangle} |\FS_k(s)| \; ds\Big],
\end{aligned}$$	
in which $A_{k,kt}(t)$ is the analytic weight with radius $z = \lambda(t)$. 
The first term is already $F[S](t,\lambda(t))$ by definition. As for the second term, we first use \eqref{Aweight} with $k'=k, \eta=kt$ and $\eta' = ks$ to bring the $A$-weight into the time integration, namely $A_{k,kt}(t) \le A_{k,ks}(t) A_{0,k(t-s)}(t)$. Next, since $\lambda(t)$ is decreasing in time, we bound $A_{k,ks}(t) \le A_{k,ks}(s)$, giving the right $A$-weight for $\FS_k(s)$, and since $\lambda(0) = 2\lambda_0$,  
$$A_{0,k(t-s)}(t) e^{-\theta_0\langle |k|(t-s)\rangle}\le e^{(2\lambda_0-\theta_0) \langle k(t-s)\rangle } \langle k(t-s)\rangle^\sigma \le C, $$ 
for some constant $C$, upon recalling that $\theta_0>2\lambda_0$. The proposition follows. 
\end{proof}

Effectively, Lemma \ref{lem-GS} reduces to bound $F[E^0]$ and $F[E^R]$, namely reducing to treat plasma echoes as if it were the exact same problem near vacuum. Let us start with the free electric field $E^0$. 

\begin{lemma}\label{lem-E0}
Let $E^0 = \nabla_x\Delta_x^{-1}\rho^0$ be the free electric field, and $\epsilon_0 = Gen[f_0](2\lambda_0)$. Then, there holds 	
\begin{equation}
\label{bdE0} F[E^0](t,\lambda(t)) \le \epsilon_0e^{-\theta_1 \langle t\rangle}. 
\end{equation}
\end{lemma}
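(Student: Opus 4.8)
The plan is to reduce everything to the explicit solution of the free transport equation \eqref{free} and then to a weighted Sobolev embedding in the velocity-frequency variable $\eta$. First I would solve \eqref{free} with data $f_0$ in Fourier: since $f^0(t,x,v)=f_0(x-vt,v)$, one gets $\widehat{f^0}_{k,\eta}(t)=\widehat f_{0,k,\eta+kt}$, hence $\widehat{\rho^0}_k(t)=\widehat f_{0,k,kt}$ and, through $E^0=\nabla_x\Delta_x^{-1}\rho^0$, the pointwise identity $|\widehat{E^0}_k(t)|=|k|^{-1}|\widehat f_{0,k,kt}|$. Substituting into the definition \eqref{def-Nrho} gives
$$F[E^0](t,\lambda(t))=\sum_{k\in\ZZ^d\setminus\{0\}}\frac1{|k|}A_{k,kt}(\lambda(t))\,|\widehat f_{0,k,kt}|.$$

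Next I would separate the loss of analyticity radius from the rest. Writing $A_{k,kt}(\lambda(t))=A_{k,kt}(2\lambda_0)\,e^{(\lambda(t)-2\lambda_0)\langle k,kt\rangle}$ and using $\lambda(t)-2\lambda_0=-\lambda_0\big(1-(1+t)^{-\delta}\big)$ together with the elementary bound $\langle k,kt\rangle\ge\langle t\rangle$ for $k\neq0$, the gain factor becomes $k$-independent:
$$e^{(\lambda(t)-2\lambda_0)\langle k,kt\rangle}\le e^{-\lambda_0(1-(1+t)^{-\delta})\langle t\rangle}\le C\,e^{-\theta_1\langle t\rangle}$$
for any $\theta_1<\lambda_0$, the last inequality holding because $1-(1+t)^{-\delta}\to1$ as $t\to\infty$ while the exponent is bounded on compact time intervals. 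Pulling this factor out of the sum reduces the claim to the $t$-uniform estimate $\sum_{k\neq0}|k|^{-1}A_{k,kt}(2\lambda_0)|\widehat f_{0,k,kt}|\le C\epsilon_0$.

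The heart of the argument is to control the pointwise value $A_{k,kt}(2\lambda_0)|\widehat f_{0,k,kt}|$, evaluated at the single frequency $\eta=kt$, by the integral quantity $Gen[f_0](2\lambda_0)$, which integrates over all $\eta$ and carries $\eta$-derivatives up to order $d$. For this I would apply the endpoint Sobolev embedding $W^{d,1}(\RR^d_\eta)\hookrightarrow L^\infty(\RR^d_\eta)$ to the weighted profile $w_k(\eta):=A_{k,\eta}(2\lambda_0)\widehat f_{0,k,\eta}$, giving
$$A_{k,kt}(2\lambda_0)|\widehat f_{0,k,kt}|=|w_k(kt)|\le C\sum_{|\alpha|\le d}\int_{\RR^d}|\partial^\alpha_\eta w_k(\eta)|\,d\eta.$$
Expanding $\partial^\alpha_\eta w_k$ by Leibniz and using that each $\eta$-derivative of $A_{k,\eta}(2\lambda_0)=e^{2\lambda_0\langle k,\eta\rangle}\langle\eta\rangle^\sigma$ costs only a bounded factor (since $|\partial_\eta\langle k,\eta\rangle|\le1$ and the polynomial weight is merely differentiated down), one bounds $|\partial^\beta_\eta A_{k,\eta}(2\lambda_0)|\le C\,A_{k,\eta}(2\lambda_0)$, so the right-hand side is $\le C\sum_{|\alpha|\le d}\int A_{k,\eta}(2\lambda_0)|\partial^\alpha_\eta\widehat f_{0,k,\eta}|\,d\eta$. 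Summing in $k$ (the factor $|k|^{-1}\le1$ is harmless) reproduces exactly $C\,Gen[f_0](2\lambda_0)=C\epsilon_0$, and combining with the decay factor of the second paragraph yields the lemma.

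I expect the only genuine step to be the weighted embedding above: one must verify that differentiating the analytic-Sobolev weight $A_{k,\eta}(2\lambda_0)$ up to $d$ times produces no growing factors in $\eta$ or $k$, which is precisely where the finite-order derivative budget $|\alpha|\le d$ built into the generator norm is consumed and why $d$ derivatives are needed in dimension $d$. Everything else — the free-transport solve and the extraction of the $k$-independent decay — is bookkeeping. I note finally that the universal constant $C$ from the embedding can be absorbed by slightly decreasing $\theta_1$, or is harmless downstream, since the nonlinear iteration only requires a bound of the form $\lesssim\epsilon_0 e^{-\theta_1\langle t\rangle}$.
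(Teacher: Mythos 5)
Your proposal is correct, and its skeleton matches the paper's proof exactly: solve the free transport in Fourier to get $\Frho^0_k(t)=\Ff_{0,k,kt}$ and $|\FE^0_k(t)|=|k|^{-1}|\Ff_{0,k,kt}|$, extract the time decay from the gap between $\lambda(t)$ and $2\lambda_0$ using $\langle k,kt\rangle\ge\langle t\rangle$ for $k\neq 0$, and control the single frequency value at $\eta=kt$ by the generator norm. The one step you implement genuinely differently is the last (and crucial) one. The paper's pointwise bound \eqref{sup-g} never differentiates the weight: it writes $|\Fg_{k,\eta}|\le\sum_{|\alpha|\le d}\int_{|\eta'|\ge|\eta|}|\partial^\alpha_\eta \Fg_{k,\eta'}|\,d\eta'$ (fundamental theorem of calculus integrated \emph{outward}) and then uses that $A_{k,\eta}$ is increasing in $|\eta|$ to slide the weight inside the integral via $A_{k,\eta}\le A_{k,\eta'}$ on the domain of integration; this gives the point-to-integral bound with constant exactly $1$, which is why the lemma can be stated as $F[E^0]\le\epsilon_0 e^{-\theta_1\langle t\rangle}$ with no prefactor. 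You instead apply the embedding $W^{d,1}(\RR^d_\eta)\hookrightarrow L^\infty$ to the weighted profile $A_{k,\eta}(2\lambda_0)\Ff_{0,k,\eta}$ and pay for the Leibniz terms with $|\partial^\beta_\eta A_{k,\eta}(2\lambda_0)|\le C\,A_{k,\eta}(2\lambda_0)$; that estimate is indeed true for the analytic-Sobolev weight (derivatives of $\langle k,\eta\rangle$ and of $\langle\eta\rangle^\sigma$ are bounded relative to the weight, uniformly in $k$), so your argument is sound, but it produces a constant $C=C(\lambda_0,\sigma,d)$ that the monotonicity trick avoids. As you correctly note, this constant is harmless: it can be absorbed by shrinking $\theta_1$ (since $\langle t\rangle\ge 1$), and downstream Proposition \ref{prop-decayE} carries a constant $C_2$ anyway. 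In short: your route is the generic one, applicable whenever weight derivatives are comparable to the weight; the paper's route exploits the monotonicity of $A_{k,\eta}$ in $|\eta|$ to get the sharper, constant-free inequality as literally stated.
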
  
\begin{proof} By definition, $\rho^0 = \int f_0(x-vt,v)\; dv$ and so $\Frho^0_k(t) = \Ff_{0,k,kt}$. Hence, we compute 
		$$
	\begin{aligned}  F[E^0](t,\lambda(t))
		&\le \sum_{k\in \ZZ^d\setminus \{0\}}    |k|^{-1}A_{k,kt}(t) | \Ff_{0,k,kt}| .
	\end{aligned}$$
To bound the summation in term of the generator norm of $f_0$, noting $A_{k,\eta}$ is increasing in $|\eta|$, we have the following pointwise bound 
\begin{equation}\label{sup-g} 
\begin{aligned} 
A_{k,\eta}| \Fg_{k,\eta}| 
&\le A_{k,\eta} \sum_{|\alpha|\le d}\int_{|\eta'| \ge |\eta|} |\partial^\alpha_\eta \Fg_{k,\eta'}| \; d\eta' 
\le \sum_{|\alpha|\le d}\int_{\RR^d} A_{k,\eta'} | \partial^\alpha_\eta \Fg_{k,\eta'}| \; d\eta' ,
\end{aligned}\end{equation}
for any $k,\eta$. The lemma thus follows, upon taking $\eta=kt$ and $g = f_0$ in the above estimate, and noting $\lambda(t) \le \lambda(1) < 2\lambda_0$ for $t\ge 1$ and so $A_{k,\eta}(t) \le A_{k,\eta}(2\lambda_0) e^{-\theta_1\langle t\rangle} $ for $k\not =0$. 
\end{proof}
 
\begin{lemma}
	\label{lem-ER}
Let	$E^R=  -\nabla_x\Delta_x^{-1}\nabla_x \cdot \cE^R$, with $\cE^R$ being defined as in \eqref{ER}. Then, as long as the bootstrap assumption \eqref{bootstrapE} remains valid, there holds 
\begin{equation}
\label{bdER} F[E^R](t,\lambda(t)) \le C_0\epsilon (\epsilon_0 + C_1 \epsilon) e^{-\theta_1 \langle t\rangle^{1-\delta}} 
\end{equation}
for some constants $C_0, C_1$. 
\end{lemma}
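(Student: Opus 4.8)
The plan is to estimate the echo integral \eqref{ER} frequency–by–frequency and transfer all the analytic regularity onto the generator norms of $E$ and of $g$. Since $E^R=-\nabla_x\Delta_x^{-1}\nabla_x\cdot\cE^R$ is a zeroth–order Fourier multiplier of $\cE^R$, one has $|\FE^R_k(t)|\le|\FcE^R_k(t)|$, so it suffices to bound $\sum_{k\ne0}A_{k,kt}(\lambda(t))|\FcE^R_k(t)|$ using \eqref{ER-Fr}. Rewriting the density through $g(s,x,v)=f(s,x+vs,v)$ — via the kinematic identity $\Ff_{k,\eta}(s)=\Fg_{k,\eta+ks}(s)$, so that $\Ff_{k_2,k(t-s)}(s)=\Fg_{k_2,\,kt-k_1s}(s)$ with $k_1+k_2=k$ — gives
\[
F[E^R](t,\lambda(t))\le \sum_{k\in\ZZ^d\setminus\{0\}}\ \sum_{k_1+k_2=k}\int_0^t (t-s)\,A_{k,kt}(\lambda(t))\,|\FE_{k_1}(s)|\,|\Fg_{k_2,\,kt-k_1s}(s)|\,ds,
\]
the factor $(t-s)$ being the linear-in-time amplification responsible for plasma echoes.

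Next I would split the weight along the echo decomposition. Writing $k=k_1+k_2$ and $kt=k_1s+(kt-k_1s)$ in the product rule \eqref{Aweight}, and using that $\lambda(\cdot)$ is decreasing so that $\lambda(t)\le\lambda(s)$, one obtains the pointwise bound
\[
A_{k,kt}(\lambda(t))\le C\,e^{-D}\,A_{k_1,k_1s}(\lambda(s))\,A_{k_2,\,kt-k_1s}(\lambda(s)),\qquad D:=(\lambda(s)-\lambda(t))\big[\langle k_1,k_1s\rangle+\langle k_2,kt-k_1s\rangle\big],
\]
the Sobolev factors being absorbed through $\langle kt\rangle^{\sigma}\le C\langle k_1s\rangle^{\sigma}\langle kt-k_1s\rangle^{\sigma}$. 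The point is that the radius deficit $\lambda(s)-\lambda(t)=\lambda_0\big((1+s)^{-\delta}-(1+t)^{-\delta}\big)>0$ is strictly positive for $s<t$ and furnishes the gain $e^{-D}$ that must defeat $(t-s)$. Summing the $A_{k_2}$–factor over $k_2$ with the pointwise–to–norm estimate \eqref{sup-g} converts $\sum_{k_2}A_{k_2,kt-k_1s}(\lambda(s))|\Fg_{k_2,kt-k_1s}(s)|$ into $Gen[g](s,\lambda(s))\le\epsilon_0+C_1\epsilon$ (by \eqref{bd-g}, uniformly in $k_1$), while the remaining $k_1$–sum is $F[E](s,\lambda(s))\le\epsilon\,e^{-\theta_1\langle s\rangle^{1-\delta}}$ by \eqref{bootstrapE}.

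The main obstacle is the resulting bilinear echo estimate: the gain $e^{-D}$ must both absorb $(t-s)$ and produce the target decay $e^{-\theta_1\langle t\rangle^{1-\delta}}$ after integrating in $s$, and the two pieces of $D$ degenerate in different regimes, so I would argue by a dichotomy governed by the echo resonance $kt-k_1s=k_2t+k_1(t-s)\approx0$. For $s\ge t/2$ one simply uses $\langle s\rangle^{1-\delta}\gtrsim\langle t\rangle^{1-\delta}$, so the bootstrap factor already carries the decay and $(t-s)\,e^{-D}$ is harmlessly integrable. For $s\le t/2$ the deficit satisfies $\lambda(s)-\lambda(t)\gtrsim\langle s\rangle^{-\delta}\gtrsim\langle t\rangle^{-\delta}$: away from resonance the argument $kt-k_1s$ is of order $\langle t\rangle$, so the $k_2$–part of $D$ already exceeds $c\langle t\rangle^{1-\delta}$ and, after bounding $|\Fg_{k_2,kt-k_1s}(s)|$ pointwise by $(\epsilon_0+C_1\epsilon)A_{k_2,kt-k_1s}(\lambda(s))^{-1}$ through \eqref{sup-g}, it supplies the full decay in $t$; near resonance the $k_2$–gain degenerates, but there the resonant time obeys $|k_1|s\approx|k|t$, so the $k_1$–part takes over, $(\lambda(s)-\lambda(t))\langle k_1,k_1s\rangle\gtrsim(\lambda(s)-\lambda(t))|k|t\gtrsim\langle t\rangle^{1-\delta}$ in all ranges of $k,k_1$. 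Carrying out this splitting and absorbing the leftover polynomial factors (powers of $(t-s)$ and of $\langle t\rangle^{\delta}$) into the sub-exponential weight at the cost of an inessential constant is precisely the analytic echo control; as remarked after Proposition \ref{prop-decayE}, the exact rate is flexible, the single power of $(t-s)$ being the one loss that the analytic regularity can afford.

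Finally, collecting the estimates, the factor $\epsilon$ comes from $F[E](s,\lambda(s))$, the factor $\epsilon_0+C_1\epsilon$ from $Gen[g](s,\lambda(s))$, and the convergent time integral together with the recovered decay yields \eqref{bdER} with a universal constant $C_0$. I emphasize that the only use of the transport structure is the kinematic identity $\Ff_{k,\eta}=\Fg_{k,\eta+kt}$; everything else is the product and monotonicity algebra of the generator norm, consistent with the claim that in the analytic regime no finer structure of \eqref{VP-pert} is required.
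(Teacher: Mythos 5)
Your setup coincides with the paper's: the reduction $|\FE^R_k|\le|\FcE^R_k|$, the substitution $\Ff_{k,\eta}=\Fg_{k,\eta+kt}$, the weight splitting via \eqref{Aweight} with the radius deficit producing the gain $e^{-D}$, and the conversion of the double sum into $F[E](s,\lambda(s))\cdot Gen[g](s,\lambda(s))$ via \eqref{sup-g}, \eqref{bootstrapE} and \eqref{bd-g} are all exactly the paper's steps. The gap is in how you recover the decay rate in the regime $s\ge t/2$. There you write that ``the bootstrap factor already carries the decay,'' i.e.\ you use $\langle s\rangle^{1-\delta}\ge (t/2)^{1-\delta}=2^{\delta-1}\langle t\rangle^{1-\delta}$ (up to constants), so your argument outputs at best $C\epsilon(\epsilon_0+C_1\epsilon)\,\langle t\rangle^{2\delta}e^{-2^{\delta-1}\theta_1\langle t\rangle^{1-\delta}}$, where the factor $\langle t\rangle^{2\delta}$ comes from your (correct) bound $\int_{t/2}^t(t-s)e^{-D}\,ds\lesssim\langle t\rangle^{2\delta}$. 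Since $2^{\delta-1}<1$, this is strictly weaker than the claimed bound \eqref{bdER}, and the loss is \emph{not} an ``inessential constant'': it sits in the exponent. The bootstrap in the proof of Theorem \ref{theo-LD} closes only because the output rate in \eqref{DecayE} matches the input rate $\theta_1$ in \eqref{bootstrapE} exactly; with your bound one would need $C(\epsilon_0+\epsilon^2)\langle t\rangle^{2\delta}e^{-c\theta_1\langle t\rangle^{1-\delta}}\le\epsilon e^{-\theta_1\langle t\rangle^{1-\delta}}$ with $c<1$, which fails for large $t$ no matter how small $\epsilon,\epsilon_0$ are, and no re-choice of $\theta_1$ helps since the loss is multiplicative in $\theta_1$. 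Your appeal to the remark after Proposition \ref{prop-decayE} is a misreading: that remark says a polynomial rate would suffice for the condition \eqref{cond-lambda}, not that the bootstrap can be closed with an output rate strictly weaker than the assumed one.

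The missing ingredient is precisely the paper's inequality \eqref{exp-bd}. Since $k\neq 0$ means $|k|\ge 1$, the radius-shrinkage gain contains a factor $e^{-2\theta_2|k|(t-s)/\langle t\rangle^{\delta}}$ with $\theta_2$ independent of $\theta_1$; combining $\langle t\rangle^{1-\delta}-\langle s\rangle^{1-\delta}\le |t-s|/\langle t\rangle^{\delta}$ (valid for \emph{all} $0\le s\le t$, not just $s\le t/2$) with \emph{half} of this gain and choosing $\theta_1\le\theta_2$ yields
\begin{equation*}
e^{\theta_1\langle t\rangle^{1-\delta}}\,e^{-\theta_2|k|(t-s)/\langle t\rangle^{\delta}}\,e^{-\theta_1\langle s\rangle^{1-\delta}}\le 1 ,
\end{equation*}
which upgrades decay at time $s$ to decay at time $t$ with zero loss in the rate. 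The other half of the gain, together with the surplus Sobolev factor $\min\{\langle kt-\ell s\rangle,\langle \ell s\rangle\}^{-\sigma}$ (which you gave away when you absorbed $\langle kt\rangle^\sigma$ symmetrically into both weights), then handles the $(t-s)$ amplification uniformly, as in \eqref{claimCR1}, via the case analysis you essentially already have ($s\le t/2$; $s\ge t/2$ off resonance; $k=\ell$; $k\neq\ell$ with $|kt-\ell s|\le t/4$ as in \eqref{aecho}). With that repair your argument becomes the paper's proof; note also that your resonance dichotomy inside $s\le t/2$ is superfluous there, since $t-s\ge t/2$ already makes the exponential gain of size $e^{-c\langle t\rangle^{1-\delta}}$ regardless of resonance — the resonance analysis is needed in the region $s\ge t/2$, which is exactly where you bypassed it.
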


\begin{proof} By definition, we note that $|\FE_k^R|\le |\FcE_k^R|$ and so $ F[E^R]\le F[\cE^R]$, where $\FcE^R_k(t)$ is computed as in \eqref{ER-Fr}. Recall that $f(t,x+vt,v) = g(t,x,v)$, and so $\Ff_{k,\eta}(t) = \Fg_{k,\eta+kt}$. Putting this into \eqref{ER-Fr}, we thus have 
\begin{equation}
\label{ER-Fr1}
\FcE^R_k(t) = \sum_{\ell \in \ZZ^d \setminus\{0\}}\int_0^t (t-s) \FE_{\ell}(s) \Fg_{k-\ell, kt - \ell s}(s)ds.
\end{equation}
Let us now compute 
	$$ 
	\begin{aligned}
	F[\cE^R](t,\lambda(t)) 
	& \le  
 \sum_{k,\ell\in \ZZ^d\setminus \{0\}} A_{k,kt}(t) \int_0^t  (t-s)   \FE_\ell(s) \Fg_{k-\ell,kt-\ell s}(s)\; ds ,
	\end{aligned}$$
in which we recall that $A_{k,\eta}(t) = e^{\lambda(t)\langle k,\eta\rangle} \langle \eta\rangle^{\sigma} $. For convenience, set 	\begin{equation}\label{def-CC}C_{k,\ell}(t,s) : =   \frac{(t-s)A_{k,kt}(t)}{A_{\ell,\ell s}(s) A_{k-\ell, kt-\ell s}(s)}. \end{equation}
Using the pointwise estimate \eqref{sup-g}, we bound 
	$$ \begin{aligned}
	F[\cE^R](t,\lambda(t)) 
	& \le
	\sum_{k,\ell\not =0} \int_0^t  C_{k,\ell}(t,s)
	A_{\ell,\ell s}(s) |\FE_\ell(s)| A_{k-\ell, kt-\ell s}(s) |\Fg_{k-\ell,kt-\ell s}(s) | \; ds 
	\\
	& \le
\sum_{|\alpha|\le d}	\sum_{k,\ell\not =0} \int_0^t C_{k,\ell}(t,s) A_{\ell,\ell s}(s) |\FE_\ell(s)|
	\int_{\RR^d} A_{k-\ell,\eta}(s)|\partial^\alpha_\eta \Fg_{k-\ell,\eta}(s) |  ds
	\\
	& \le
	\int_0^t \sup_{k,\ell\not=0}C_{k,\ell}(t,s) F[E](s,\lambda(s)) 
	Gen[g](s,\lambda(s)) ds.
	\end{aligned}$$
Using the bootstrap assumption \eqref{bootstrapE} and the estimate \eqref{bd-g}, we therefore obtain 
	$$ \begin{aligned}
F[\cE^R](t,\lambda(t)) 
& \le \epsilon (\epsilon_0 + C_1 \epsilon)
\int_0^t \sup_{k,\ell\not=0}C_{k,\ell}(t,s)  e^{-\theta_1 \langle s\rangle^{1-\delta}}  ds.
\end{aligned}$$
Lemma \ref{lem-ER} thus follows from the following claim 
	\begin{equation}\label{claimCR}
	e^{\theta_1\langle t\rangle^{1-\delta}} \int_0^t \sup_{k,\ell\not=0}C_{k,\ell}(t,s) e^{-\theta_1\langle s\rangle^{1-\delta}} ds \le C_0 .
	\end{equation}
Let us now prove the above claim, namely the suppression of echoes, noting the factor $(t-s)$ present in $C_{k,\ell}(t,s)$. Indeed, by definition of $A_{k,kt}$, we note that 
	$$ 
	\begin{aligned}
	C_{k,\ell}(t,s) 
	&\le (t-s)  e^{(\lambda(t)-\lambda(s)) \langle k,kt\rangle} 
	\langle kt\rangle^\sigma \langle kt - \ell s \rangle^{-\sigma} \langle \ell s \rangle^{-\sigma} 
	\\ &\le (t-s)  e^{(\lambda(t)-\lambda(s)) \langle k,kt\rangle}  \min \Big\{ \langle kt - \ell s \rangle, \langle \ell s \rangle\Big \} ^{-\sigma}.
	\end{aligned}$$
	Recall that $\lambda(t) = \lambda_0 (1 + \langle t\rangle^{-\delta})$, which we may bound 
	$|\lambda(s) - \lambda(t)|\ge 2\theta_2|t-s|/\langle t\rangle^{1+ \delta}$ for some positive constant $\theta_2$. Therefore, we have 
	\begin{equation}\label{bd-C1}
	\begin{aligned}
	C_{k,\ell}(t,s) 
	&\le C_0 (t-s) e^{-2\theta_2 |k(t-s)|/ \langle t\rangle^{\delta}}
	\min \Big\{ \langle kt - \ell s \rangle, \langle \ell s \rangle\Big \} ^{-\sigma} .
	\end{aligned}\end{equation}
We stress that the gain of an exponential factor is precisely due to the slowly shrinking of analyticity radius. To prove the claim \eqref{claimCR}, we first take care of the exponential term $e^{\theta_1\langle t\rangle^{1-\delta}}$. Indeed, using one half of the exponential gain in \eqref{bd-C1}, since $k\not =0$, and noting $ \langle t\rangle^{1-\delta} - \langle s\rangle^{1-\delta} \le |t-s| / \langle t\rangle^\delta $ for $0\le s\le t$, we bound 
\begin{equation}
\label{exp-bd}
e^{\theta_1\langle t\rangle^{1-\delta}} e^{-\theta_2 |t-s| / \langle t\rangle^{\delta}} e^{-\theta_1\langle s\rangle^{1-\delta}}  \le 1,
\end{equation}  
provided $\theta_1\le \theta_2$. Recall that $\theta_1$ appears in the bootstrap assumption \eqref{bootstrapE}, and therefore, can be taken smaller, if needed, so that $\theta_1\le \theta_2$. Therefore, the claim \eqref{claimCR} would follow, provided that 
\begin{equation}\label{claimCR1}
\int_0^t (t-s) e^{-\theta_2 |k(t-s)|/ \langle t\rangle^{\delta}}
\min \Big\{ \langle kt - \ell s \rangle, \langle \ell s \rangle\Big \} ^{-\sigma} ds \le C_0 ,
\end{equation}
for $k, \ell\not =0$. Namely, we need to bound the growth factor $(t-s)$. To this end, we consider two cases: $s\le t/2$ and $s\ge t/2$. In the former case, we use the exponential gain, namely $e^{-\theta_2 |k(t-s)|/ \langle t\rangle^{\delta}} \le e^{-\frac12\theta_2 \langle t\rangle^{1-\delta}}$, which proves \eqref{claimCR1} in this case. On the other hand, when $s\ge t/2$, we observe that if either 
$$\min\Big\{ \langle kt - \ell s \rangle, \langle \ell s \rangle \Big\} \ge \langle \ell s \rangle \ge \frac t2$$
or $$\min\Big\{ \langle kt - \ell s \rangle, \langle \ell s \rangle \Big\} \ge \langle kt - \ell s \rangle \ge \frac t4$$
holds, the claim \eqref{claimCR1} again follows, since $\ell\not =0$, $s\ge t/2$, and $\sigma\ge 1$. Finally, it remains to consider the case when $|kt - \ell s|\le t/4$. Note that the case when $k = \ell$ clearly gives \eqref{claimCR1}, using the Sobolev weight $\langle k(t-s)\rangle^{-\sigma}$. For $k\not =\ell$ (and $s\ge t/2$), we bound 
\begin{equation}\label{aecho}|k(t-s)| \ge |k-\ell|s - |kt - \ell s| \ge s - \frac t4 \ge \frac t4 ,\end{equation}
which again gives an exponential decay, since 
$e^{-\theta_2 |k(t-s)|/ \langle t\rangle^{\delta}}\le e^{-\frac14\theta_2 \langle t\rangle^{1-\delta}} $ in this case. This ends the proof of the lemma. 
\end{proof}

\begin{proof}[Proof of Proposition \ref{prop-decayE}]
The proposition, establishing decay of the electric field, now follows straightforwardly from the above lemmas. Indeed, from the formulation \eqref{recallE} and Lemma \ref{lem-GS}, we bound 
$$
\begin{aligned}
F[E](t, \lambda(t)) &\le F[G\star_{t,x}E^0](t,\lambda(t)) + F[G\star_{t,x}E^R](t,\lambda(t))
\\
& \le F[E^0](t,\lambda(t)) + F[E^R](t,\lambda(t))+  C\int_0^t e^{-2\theta_1 (t-s)}\Big(F[E^0] + F[E^R]\Big)(s,\lambda(s))\; ds.
\end{aligned} 
$$
Next, by using Lemmas \ref{lem-E0}-\ref{lem-ER}, the first two terms are already treated as desired, while the integral term is bounded by 
$$ 
C_0(\epsilon_0 + C_1 \epsilon^2)  \int_0^t e^{-2\theta_1 (t-s)} e^{-\theta_1 \langle s\rangle^{1-\delta}} \; ds 
\le C_0(\epsilon_0 + C_1 \epsilon^2) e^{-\theta_1 \langle t\rangle^{1-\delta}},$$
upon noting $e^{-\theta_1 (t-s)}\le e^{-\theta_1 (t-s)/\langle t\rangle^\delta}$ and using again the inequality \eqref{exp-bd} with $\theta_2=\theta_1$. This ends the proof of Proposition \ref{prop-decayE}, and therefore the main theorem. 
\end{proof}

\section{Landau damping for Vlasov-Riesz systems}

In this section, we establish the nonlinear Landau damping near stable Penrose equilibria for a family of Vlasov systems with Riesz-type interaction potentials, namely the Vlasov equation \eqref{VP1}, or the perturbed system \eqref{VP-pert} near $\mu(v)$, where the electric field is given by \eqref{VP2Riesz} for any $\alpha \in [0,2]$. Our result reads
 
 \begin{theorem}[Nonlinear Landau damping for Vlasov-Riesz systems]\label{theo-LDR}
	Let $\lambda_0 > 0$, $\alpha \in [0,2]$, and let $\mu(v)$ be any spatially homogenous, real analytic, Penrose stable equilibria. Then, for any initial data of the form $\mu(v) + f_0$ with  
	$$ 
\epsilon_0:=	\sum_{|\alpha|\le d}\sum_{k\in \ZZ^d}  \int_{\RR^d}  e^{2\lambda_0 \langle k,\eta\rangle} 
	|\partial^\alpha_\eta \widehat f_{0,k,\eta}| d \eta 
	$$
	being sufficiently small, 
	the nonlinear Landau damping for the Vlasov-Riesz system \eqref{VP1}-\eqref{VP2Riesz} holds: $\|E(t)\|_{L^\infty}\lesssim \epsilon_0 e^{-\lambda_0 t}$ for all $t\ge 0$, and $f(t,x+vt,v)$ converges to a limit $f_\infty(x,v)$ in the large time.
\end{theorem}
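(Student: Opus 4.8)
The plan is to re-run the entire analyticity framework of Section \ref{sec-analytic} essentially verbatim, tracking the single structural change: the Poisson Fourier multiplier $\nabla_x\Delta_x^{-1}\nabla_x\cdot$, which is of order zero, is replaced by the Riesz multiplier $\nabla_x(-\Delta_x)^{-\alpha/2}\nabla_x\cdot$, whose symbol $|k|^{2-\alpha}\frac{k\otimes k}{|k|^2}$ is of order $2-\alpha\ge 0$. Writing the electric field formulation \eqref{Eform1} with the Riesz multiplier in place of the Poisson one, the linear term $\cE[E]$ from \eqref{ER-lin} is unchanged, the free field obeys $|\FE^0_k|=|k|^{1-\alpha}|\Frho^0_k|$, and the nonlinear field obeys $|\FE^R_k|\le|k|^{2-\alpha}|\FcE^R_k|$. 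For $\alpha=2$ both powers reduce to the order-zero factors used in Lemmas \ref{lem-E0} and \ref{lem-ER}, so the Riesz case differs only by the insertion of a polynomial-in-$|k|$ weight of degree at most $2$ (worst at $\alpha=0$), which on the torus is harmless.

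First I would establish the linear theory on the torus for analytic $\mu$, i.e. the analog of \eqref{re-Green} for the Riesz Green function $G^\alpha$. The scalar Volterra kernel governing the linearized field is $K^\alpha_k(\tau)=|k|^{2-\alpha}\tau\,\hat\mu(k\tau)$, which differs from the Poisson kernel precisely by the factor $|k|^{2-\alpha}$; since $k\in\ZZ^d\setminus\{0\}$ forces $|k|\ge 1$ and analyticity of $\mu$ makes $|\hat\mu(k\tau)|\le Ce^{-\theta_0\langle k\tau\rangle}$, the kernel still decays exponentially, and the assumed Penrose stability yields an exponentially decaying resolvent. Thus $\FG^\alpha_k(t)=\delta(t)+\FG^{\alpha,r}_k(t)$ with $|\FG^{\alpha,r}_k(t)|\le C\langle k\rangle^{m}e^{-\theta_0\langle kt\rangle}$ for some fixed power $m$; the polynomial prefactor replaces the $\langle k\rangle^{-1}$ gain of \eqref{re-Green}, but it is harmless because $\theta_0>2\lambda_0$ leaves room to peel off $e^{-\delta_0\langle k\rangle}\langle k\rangle^{m}\le C$. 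With this in hand Lemma \ref{lem-GS} goes through with the modified prefactor.

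Next I would revisit the two source estimates. In Lemma \ref{lem-E0} the gain $|k|^{-1}$ becomes $|k|^{1-\alpha}$; when $\alpha\ge 1$ this is still a gain, and when $\alpha<1$ the growth $|k|^{1-\alpha}\le|k|$ is absorbed by splitting off a factor $e^{-c|k|}$ from the exponential weight, using the strict inequality $\lambda(t)<2\lambda_0$ for $t\ge 1$ that already produced the factor $e^{-\theta_1\langle t\rangle}$ there. In Lemma \ref{lem-ER} the extra factor $|k|^{2-\alpha}$ multiplies $C_{k,\ell}(t,s)$; since for $t\ge 1$ one has $|k|^{2-\alpha}\le\langle kt\rangle^{2-\alpha}$, I would absorb it into the Sobolev weight by taking $\sigma\ge 1+(2-\alpha)$, so that $\langle kt\rangle^{\sigma}\langle kt-\ell s\rangle^{-\sigma}\langle\ell s\rangle^{-\sigma}$ still delivers the decay used in \eqref{bd-C1}--\eqref{claimCR1} (alternatively the exponential room in \eqref{bd-C1} absorbs it directly). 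Crucially, the echo mechanism itself -- the linear-in-time factor $(t-s)$ and its suppression via the slowly shrinking radius $\lambda(t)$, namely \eqref{cond-lambda}, \eqref{exp-bd} and \eqref{aecho} -- is entirely independent of $\alpha$, so the core of Lemma \ref{lem-ER} is untouched, and the bootstrap of Propositions \ref{prop-g}--\ref{prop-decayE} closes exactly as before.

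The main obstacle is the borderline case $\alpha=0$ (Vlasov-Dirac-Benney), where the interaction multiplier has full order $2$ and $E^R$ loses two derivatives relative to $\cE^R$. This is precisely where no Gevrey margin is available, consistent with the ill-posedness recalled in the introduction, so the argument must rely on genuine analyticity: what makes it work on the torus is that $|k|\ge 1$ together with the exponential weight $e^{\lambda(t)\langle k,kt\rangle}$ dominates any fixed polynomial in $|k|$, and the $L^1$-based generator norms lose derivatives only through these harmless polynomial factors rather than through the echo cascade. I would therefore take particular care to verify that the constants $C_0,C_1,C_2$ and the requirement $\theta_0>2\lambda_0$ can be chosen uniformly in $\alpha\in[0,2]$, so that the smallness threshold on $\epsilon_0$ is $\alpha$-independent.
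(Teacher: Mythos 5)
Your setup matches the paper's (rerun the Section \ref{sec-analytic} scheme and track the Riesz multiplier of order $2-\alpha$), but the step where this theorem is actually won or lost --- controlling the echo term $E^R$ despite a loss of up to two derivatives --- is handled in your proposal by a mechanism that fails. You propose to absorb the factor $|k|^{2-\alpha}$ \emph{pointwise} in $s$, either into the Sobolev weights by enlarging $\sigma$, or into the exponential factor of \eqref{bd-C1}, and then to reuse the old claim \eqref{claimCR1} as a black box. Consider the resonant case $\alpha=0$, $k=\ell$, $s\in[t/2,t]$ with $t-s\le 1/|k|$: there $e^{-\theta_2|k(t-s)|/\langle t\rangle^\delta}\ge e^{-\theta_2}$, $\langle k(t-s)\rangle^{-\sigma}\ge 2^{-\sigma/2}$, and the minimum appearing in \eqref{claimCR1} equals $\langle k(t-s)\rangle=O(1)$ \emph{no matter how large $\sigma$ is}, so pointwise there is nothing left that decays in $|k|$, and $|k|^{2}$ cannot be absorbed. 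If instead you write $|k|^{2-\alpha}\le\langle kt\rangle^{2-\alpha}$ and use $\langle kt\rangle\le 2\max\{\langle \ell s\rangle,\langle kt-\ell s\rangle\}$, you are left with an unabsorbed factor $\max\{\cdots\}^{2-\alpha}\sim(|k|t)^{2-\alpha}$; feeding this into the analogue of \eqref{claimCR1} produces a bound of size $t^{2-\alpha}|k|^{-\alpha}$, which grows in time, so the bootstrap does not close. In short, your assertion that ``the core of Lemma \ref{lem-ER} is untouched'' is exactly where the proof breaks: the old claim gives a constant $C_0$, and $|k|^{2-\alpha}C_0$ is useless.

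What the paper does instead is upgrade the claim itself: the gain of two derivatives comes not from any weight but from the \emph{time integration over the resonance window}. By the change of variables $\theta=k(t-s)$,
\begin{equation*}
\int_0^t(t-s)\,e^{-\frac14\theta_2|k(t-s)|/\langle t\rangle^{\delta}}\,ds\ \lesssim\ \langle t\rangle^{2\delta}|k|^{-2},
\end{equation*}
which is \eqref{intk2}: the window where the exponential is $O(1)$ has length $\sim\langle t\rangle^{\delta}/|k|$ and carries the factor $(t-s)\lesssim\langle t\rangle^{\delta}/|k|$. This yields the strengthened claim \eqref{claimCR1again}, whose right-hand side $C_0|k|^{-2}$ exactly cancels the worst multiplier $|k|^{2}$ at $\alpha=0$. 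The price is the loss $\langle t\rangle^{2\delta}$, and the real work (claim \eqref{claimCR2}) is to recover it: one must show $e^{-\frac14\theta_2|k(t-s)|/\langle t\rangle^{\delta}}\min\{\langle \ell,\ell s\rangle,\langle k-\ell,kt-\ell s\rangle\}^{-\sigma}\le C_0\langle t\rangle^{-2\delta}$ uniformly, via the case analysis $s\le t/2$; $s\ge t/2$ with $\min\gtrsim t$; the echo case $|kt-\ell s|\le t/4$, $k\ne\ell$ via \eqref{aecho}; and finally $k=\ell$, where the Sobolev weight gives $\int_0^t(t-s)\langle k(t-s)\rangle^{-\sigma}ds\le C_0|k|^{-2}$ directly. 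None of this appears in your proposal, and it cannot be skipped. A secondary flaw: your absorption of a polynomial prefactor $\langle k\rangle^{m}$ in the Riesz Green function by ``peeling off $e^{-\delta_0\langle k\rangle}$'' is likewise unavailable, since the resolvent decays in $\langle k(t-s)\rangle$, not in $\langle k\rangle$; near $s=t$ such a prefactor would make the convolution kernel in Lemma \ref{lem-GS} non-integrable, so one must actually establish the uniform-in-$k$ bound of the form \eqref{re-Green} for the Riesz kernel rather than weaken it.
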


We stress that the results established in Theorem \ref{theo-LDR} are new for the Vlasov-Riesz systems, and surprisingly, include the borderline Vlasov-Dirac-Benney system when $\alpha =0$ in the sharp analytic spaces. The linear analysis has been studied in \cite{BardosBesse, DanielToan}, see also \cite{HanKwanR}, however the nonlinear Landau damping appears open, which we resolve in Theorem \ref{theo-LDR}.  

\begin{proof}[Proof of Theorem \ref{theo-LDR}]
The proof follows identically to that of Theorem \ref{theo-LD}, upon noting that the nonlinear electric field $E^R$ in \eqref{Eform}-\eqref{ER} now reads
\begin{equation}
\label{ERR}
E^R = \nabla_x (-\Delta_x)^{-\alpha/2}\nabla_x\cdot \int_0^t \int_{\RR^3} (t-s)[Ef](s,x-v(t-s),v)\; dvds,
\end{equation}
and therefore, in Fourier variables, reads
\begin{equation}
\label{ER-Fr1R}
\FE^R_k(t) = - |k|^{-\alpha}k^{\otimes2} \sum_{\ell \in \ZZ^d \setminus\{0\}}\int_0^t (t-s) \FE_{\ell}(s) \Fg_{k-\ell, kt - \ell s}(s)ds,
\end{equation}
which may a priori face a loss of two derivatives (e.g. for the case of Vlasov-Dirac-Benney systems with $\alpha =0$). In view of the proof of Lemma \ref{lem-ER}, it suffices to show that the time integral, see \eqref{claimCR} and \eqref{claimCR1}, gains two derivatives: precisely, we claim that 
\begin{equation}\label{claimCR1again}
\int_0^t (t-s) e^{-\frac12\theta_2 |k(t-s)|/ \langle t\rangle^{\delta}}
\min \Big\{\langle \ell, \ell s \rangle,  \langle k-\ell, kt - \ell s \rangle\Big \} ^{-\sigma} ds \le C_0 |k|^{-2},
\end{equation}
for $k, \ell\not =0$. Indeed, using the fact that $xe^{-x}\lesssim 1$ for $x\ge 0$, we may bound    
\begin{equation}\label{intk2}
\int_0^t (t-s) e^{-\frac14\theta_2 |k(t-s)|/ \langle t\rangle^{\delta}} ds \lesssim \langle t\rangle^{2\delta} |k|^{-2},
\end{equation}
upon introducing the change of variable $\theta = k(t-s)$. We stress that the above time integration gains a factor of $|k|^{-2}$ on the right hand side. That is, the claim \eqref{claimCR1again} would follow, provided that 
\begin{equation}\label{claimCR2}
\sup_{0\le s\le t} e^{-\frac14\theta_2 |k(t-s)|/ \langle t\rangle^{\delta}}
\min \Big\{\langle \ell, \ell s \rangle,  \langle k-\ell, kt - \ell s \rangle\Big \} ^{-\sigma} \le C_0\langle t\rangle^{-2\delta} 
\end{equation}
uniformly for $k,\ell\not =0$ and $t\ge 0$. The claim \eqref{claimCR2} clearly holds, if $s\le t/2$, since the exponential gain yields a rapid decay $e^{-\frac12\theta_2 |k(t-s)|/ \langle t\rangle^{\delta}} \le e^{-\frac14\theta_2 \langle t\rangle^{1-\delta}}$, noting $k\not =0$. On the other hand, in the case when $s\ge t/2$, then we have two cases: $\min\Big\{\langle \ell, \ell s \rangle ,  \langle k-\ell, kt - \ell s \rangle\Big\} \ge \langle \ell, \ell s \rangle$ or $\min\Big\{\langle \ell, \ell s \rangle ,  \langle k-\ell, kt - \ell s \rangle\Big\} \ge \langle k-\ell, kt-\ell s \rangle$ with $|kt-\ell s|\ge t/4$, each of which yields decay of order $\langle t\rangle^{-\sigma}$, proving \eqref{claimCR2}, since $\delta \ll1$. The case when $|kt - \ell s|\le t/4$ and $k\not =\ell$, is treated as before, see \eqref{aecho}, using the exponential gain 
$e^{-\frac14\theta_2 |k(t-s)|/ \langle t\rangle^{\delta}}\le e^{-\frac1{16}\theta_2 \langle t\rangle^{1-\delta}} $, yielding  \eqref{claimCR2}. Finally, in the case when $k=\ell$, we gain $\langle k(t-s)\rangle^{-\sigma} $ from the Sobolev weight, and therefore, instead of \eqref{intk2}, we bound
$$\int_0^t (t-s) \langle k(t -s) \rangle^{-\sigma} ds \le C_0|k|^{-2},
$$
proving again \eqref{claimCR1again} in this case. 
This completes the proof of 
Theorem \ref{theo-LDR}.
\end{proof}

\subsection*{Acknowledgement}
{\em The author would like to thank Daniel Han-Kwan and Frederic Rousset for their kind suggestions to include the borderline Vlasov-Dirac-Benney system. The research is supported in part by the NSF under grant DMS-2349981. The author would like to acknowledge the hospitality of the Institut des Hautes \'Etudes Scientifiques for a visit during which part of this work was
carried out.}

%%%%%%%%%%%%%%%%%
%%%%%%%%%%%%%%%%%
\bibliographystyle{abbrv}
%\bibliography{Landau}

\end{document}